\newtheorem{theorem}{Theorem}[section]
\newtheorem{corollary}[theorem]{Corollary}
\newtheorem{lemma}[theorem]{Lemma}
\theoremstyle{definition}
\newtheorem{definition}[theorem]{Definition}
\newtheorem{example}[theorem]{Example}
\theoremstyle{remark}
\newtheorem{remark}[theorem]{Remark}
\newtheorem{note}[theorem]{Note}
\numberwithin{equation}{section}
\DeclareMathOperator{\RE}{Re} \DeclareMathOperator{\IM}{Im}
\newcommand*{\backin}{\rotatebox[origin=c]{-180}{$\in$}}
\begin{document}

\title{Application of Pythagorean means and Differential Subordination}
\thanks{$^*$Corresponding Author\\The second author is supported by The Council of Scientific and Industrial Research(CSIR). Ref.No.:08/133(0018)/2017-EMR-I.}
\author[S. Sivaprasad Kumar]{S. Sivaprasad Kumar}
\address{Department of Applied Mathematics, Delhi Technological University, Delhi--110042, India}
\email{spkumar@dce.ac.in}
\author{Priyanka Goel$^*$}
\address{Department of Applied Mathematics, Delhi Technological University, Delhi--110042, India}
\email{priyanka.goel0707@gmail.com}

\subjclass[2010]{30C45,30C50, 30C80}

\keywords{Subordination, Harmonic Mean, Geometric Mean, Arithmetic Mean, Univalent functions}
\begin{abstract}
For $0\leq\alpha\leq 1,$ let $H_{\alpha}(x,y)$ be the convex weighted harmonic mean of $x$ and $y.$ We establish differential subordination implications of the form
\begin{equation*}
	H_{\alpha}(p(z),p(z)\Theta(z)+zp'(z)\Phi(z))\prec h(z)\Rightarrow p(z)\prec h(z),
\end{equation*} where $\Phi,\;\Theta$ are analytic functions and $h$ is a univalent function satisfying some special properties. Further, we prove differential subordination implications involving a combination of three classical means. As an application, we generalize many existing results and obtain sufficient conditions for starlikeness and univalence.
\end{abstract}

\maketitle

\section{Introduction}
Let $\mathbb{D}:=\{z\in\mathbb{C}:|z|<1\}$ be the open unit disc and $\mathcal{H}$ be the class of functions, analytic on $\mathbb{D}.$ For any complex number $a$ and a positive integer $n,$ let $\mathcal{H}[a,n]$ be the subclass of $\mathcal{H}$ containing functions of the form $f(z)=a+a_nz^n+a_{n+1}z^{n+1}+\cdots.$ Functions in $\mathcal{H}$ with the normalization $f(0)=0,$ $f'(0)=1$ constitutes a subclass of $\mathcal{H},$ which we denote by $\mathcal{A}.$ Further, $\mathcal{S}$ denotes the subclass of $\mathcal{A}$ consisting of univalent functions. The class of functions $p$ of the form $p(z)=1+c_1z+c_2z^2+\cdots$ with the property $\RE p(z)>0$ on $\mathbb{D},$ is known as the Carath\'eodory class and is denoted by $\mathcal{P}.$ For any two analytic functions $f$ and $F,$ it is said that $f$ is subordinate to $F,$ written as $f\prec F$ if there exists an analytic function $\omega(z),$ satisfying $\omega(0)=0$ and $|\omega(z)|<1,$ such that $f(z)=F(\omega(z)).$ In particular, if $F$ is univalent then $f\prec F$ if and only if $f(0)=F(0)$ and $f(\mathbb{D})\subset F(\mathbb{D}).$ Let $\mathcal{R}$ denotes the class of functions $f\in\mathcal{A}$ satisfying $\RE f'(z)>0$ on $\mathbb{D},$ then clearly $\mathcal{R}\subset\mathcal{S}.$ A subclass $\mathcal{S}^*$ of $\mathcal{S},$ known as the class of starlike functions, consists of the functions $f$ satisfying $\RE(zf'(z)/f(z))>0$ in $\mathbb{D}.$ The class of convex functions, denoted by $\mathcal{C},$ is another subclass of $\mathcal{S}$ consisting of the functions $f$ with $\RE(1+zf''(z)/f'(z))>0$ in $\mathbb{D}.$ An analytic function $f$ is said to be close-to-convex if $\RE(zf'(z)/g(z))>0\;(z\in\mathbb{D}),$ where $g$ is a starlike function. The class of close-to-convex functions is denoted by $\mathcal{K}.$ Let $\mathcal{S}^*(\alpha)$ $(0\leq\alpha<1)$ be the subclass of $\mathcal{S}^*$ consisting of the functions $f$ satisfying $\RE(zf'(z)/f(z))>\alpha,$ known as the class of starlike functions of order $\alpha.$ The class of $\alpha$-convex  functions of order $\beta$ is defined as
\begin{equation*}
	\mathcal{M}_{\alpha}(\beta):=\left\{f\in\mathcal{A}:	\RE\left((1-\alpha)\dfrac{zf'(z)}{f(z)}+\alpha\left(1+\dfrac{zf''(z)}{zf'(z)}\right)\right)>\beta\right\}
\end{equation*}
and the class of $\alpha$-starlike functions of order $\beta$ is defined as
\begin{equation*}
	\mathcal{L}_{\alpha}(\beta): =\left\{f\in\mathcal{A}:\RE\left(\dfrac{zf'(z)}{f(z)}\right)^{1-\alpha}\left(1+\dfrac{zf''(z)}{f'(z)}\right)^{\alpha}>\beta\right\},	
\end{equation*}
where $z\in\mathbb{D},$ $\alpha$ is any real number and $0\leq\beta<1.$ For $0\leq\alpha\leq1,$ the three classical means namely arithmetic, geometric and harmonic mean of two numbers $x$ and $y,$ in their special form, known as convex weighted means are defined respectively as follows:
\begin{eqnarray*}
	A_{\alpha}(x,y)= (1-\alpha)x+\alpha y;\;
	G_{\alpha}(x,y)= x^{\alpha}y^{1-\alpha};\;
	H_{\alpha}(x,y)= \dfrac{xy}{\alpha y+(1-\alpha)x}.
\end{eqnarray*}
Note that the classes $\mathcal{M}_{\alpha}(\beta)$ and $\mathcal{L}_{\alpha}(\beta)$ are defined by using the convex arithmetic mean and convex geometric mean, of the quantities $zf'(z)/f(z)$ and $1+zf''(z)/f'(z)$ respectively. Let $h$ be univalent on $\mathbb{D}$ and $\psi:\mathbb{C}^2\times\mathbb{D}\rightarrow\mathbb{C}$ be a complex function, then an analytic function $p,$ which satisfies
\begin{equation}\label{h15}
	\psi(p(z),zp'(z))\prec h(z)
\end{equation}
is called the solution of the differential subordination~\eqref{h15}. The theory of differential subordination introduced by Miller and Mocanu~\cite{ds} is developed on the basis of the following implication
\begin{equation*}
	\psi(p(z),zp'(z))\prec h(z)\Rightarrow p(z)\prec q(z),\qquad z\in\mathbb{D},
\end{equation*}
where $h$ and $q$ are univalent in $\mathbb{D}$ and $q$ is a dominant of the solutions of the differential subordination~\eqref{h15}. In this direction, a good amount of work has been carried out by many authors(see~\cite{pri3,first,ssp,adiba,vibha,shagun4}). In 1996, Kanas et. al~\cite{kanas1996} introduced and studied differential subordinations involving geometric mean of $p(z)$ and $p(z)+zp'(z).$ Later in 2011, the authors in~\cite{lecko2} proved several differential subordination results associated with arithmetic as well as geometric mean of certain analytic functions. In~\cite{kanasels}, Cri\c{s}an and Kanas considered a combination of arithmetic and geometric mean for which they established the following implication:
\begin{equation}\label{h16}
	\gamma (p(z))^{\delta}+(1-\gamma)(p(z))^{\mu}\left(p(z)+\dfrac{zp'(z)}{p(z)}\right)^{1-\mu}\prec h(z)\Rightarrow p(z)\prec h(z).
\end{equation}
This result is applied to prove some univalence and starlikeness criteria. A particular form of this expression, for different choices of $h(z)$ has been worked upon by Kanas~\cite{kanasbmms}. Recently, Gavri\c{s}~\cite{gavris} melded all the three pythagorean means in one expression and proved an implication similar to~\eqref{h16} for a specific choice of $h(z).$ In the present investigation, we extend the results of~\cite{gavris} by proving a similar implication for another choice of $h(z)$ and generalize many existing results. In 2014, Cho et. al~\cite{geo} established conditions on an analytic function $\Phi(z)$ so that the geometric mean of $p(z)$ and $p(z)+zp'(z)\Phi(z)$ is subordinate to $h(z)$ leads $p(z)$ to be subordinate to $h(z),$ where $h$ is a univalent function. Later in the same year, Chojnacka and Lecko~\cite{harmonic} proved a similar result for harmonic mean. The arithmetic mean is already covered by Miller and Mocanu in~\cite{ds}. In our study, we prove some general differential subordination results involving the harmonic mean of the quantities $p(z)$ and $p(z)\Theta(z)+zp'(z)\Phi(z),$ where $\Theta(z)$ and $\Phi(z)$ are analytic functions. As an application to our main results, we establish various differential subordination results pertaining to several subclasses of $\mathcal{S}^*.$ Further, we obtain sufficient conditions for starlikeness and univalence, which generalize some earlier known results. Here are a few prerequisites for our main findings:
\begin{definition}\label{defH}
	Let $t\in[0,1]$ and $\Theta, \Phi\in\mathcal{H}$ with $\Theta(0)=1.$ By $\mathcal{H}(t; \Theta, \Phi),$ we mean the subclass of $\mathcal{H}$ of all functions $f$ such that
	\begin{equation}\label{5}
		H_{t;\Theta,\Phi,f}(z):=\begin{cases}
			\dfrac{P_{0;\Theta,\Phi,f}(z)P_{1;\Theta,\Phi,f}(z)}{P_{1-t;\Theta,\Phi,f}(z)} & \quad P_{1-t;\Theta,\Phi,f}(z)\neq0, \\
			\lim_{\mathbb{D}\backin\zeta\rightarrow z}\dfrac{P_{0;\Theta,\Phi,f}(\zeta)P_{1;\Theta,\Phi,f}(\zeta)}{P_{1-t;\Theta,\Phi,f}(\zeta)} & \quad P_{1-t;\Theta,\Phi,f}(z)=0,
		\end{cases}
	\end{equation}
	is an analytic function in $\mathbb{D},$
	where
		\begin{equation*}
		P_{t;\Theta,\Phi,f}(z):=(1-t+t\Theta(z))f(z)+t\Phi(z)zf'(z),\quad z\in\mathbb{D}
	\end{equation*}
and define $H_{t;\Theta,\Phi,0}\equiv 0.$
\end{definition}
\begin{definition}
Let $\mathcal{Q}$ denotes the class of all convex functions $h$ with the following properties:
	\begin{enumerate}
		\item $h(\mathbb{D})$ is bounded by finitely many smooth arcs which form corners at their end points (including corners at infinity),
		\item $E(h)$ is the set of all points $\zeta\in\partial\mathbb{D}$ which corresponds to corners $h(\zeta)$ of $\partial h(\mathbb{D}),$
		\item $h'(\zeta)\neq 0$ exists at every $\zeta\in\partial\mathbb{D}\backslash E(h).$
\end{enumerate}
\end{definition}
\begin{lemma}[Lemma~2.2a]\cite{ds}\label{jack}
  Let $z_0\in\mathbb{D}$ and $r_0=|z_0|.$ Let $f(z)=a_nz^n+a_{n+1}z^{n+1}+\cdots$ be continuous on $\overline{\mathbb{D}}_{r_0}$ and analytic on $\mathbb{D}\cup\{z_0\}$ with $f(z)\neq 0$ and $n\geq 1.$ If $$|f(z_0)|=\max\{|f(z)|:z\in\overline{\mathbb{D}}_{r_0}\}$$ then there exists an $m\geq n$ such that $$\dfrac{z_0f'(z_0)}{f(z_0)}=m\;\text{and}\;\RE\left(\dfrac{z_0f''(z_0)}{f'(z_0)}+1\right)\geq m.$$
\end{lemma}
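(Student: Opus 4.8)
The plan is to derive both conclusions from the first‑ and second‑order optimality conditions satisfied by the real part of a suitable analytic function at a boundary maximum, the key preliminary being a reduction via the maximum modulus principle. \emph{Reduction.} Since $f(z)=a_nz^n+\cdots$ is analytic on $\mathbb{D}$, the function $g(z):=f(z)/z^n$ is analytic on $\mathbb{D}$ and continuous on $\overline{\mathbb{D}}_{r_0}$; moreover $z_0\neq0$ and $g(z_0)=f(z_0)/z_0^n\neq0$. On the circle $|z|=r_0$ one has $|f(z)|=r_0^n|g(z)|$, so the hypothesis that $|f|$ attains its maximum over $\overline{\mathbb{D}}_{r_0}$ at $z_0$ forces $|g|$ to attain its maximum over $|z|=r_0$ at $z_0$; applying the maximum modulus principle to $g$ on $\overline{\mathbb{D}}_{r_0}$ then gives $|g(z)|\le|g(z_0)|$ for all $|z|\le r_0$. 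Writing $z_0=r_0e^{i\theta_0}$ and $u(r,\theta):=\log|g(re^{i\theta})|=\RE\log g(re^{i\theta})$, which is a smooth harmonic function near $(r_0,\theta_0)$ because $g(z_0)\neq0$, we conclude that $u$ attains a maximum over $\{(r,\theta):0<r\leq r_0\}$ at the boundary point $(r_0,\theta_0)$. This reduction is the crux, and the main obstacle: it is precisely what produces the sharp constant $n$ rather than merely a positive constant.

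\emph{First conclusion.} From this maximum I would read off, all evaluated at $(r_0,\theta_0)$, the relations $u_\theta=0$ and $u_{\theta\theta}\leq0$ (interior maximum of $\theta\mapsto u(r_0,\theta)$) and $u_r\geq0$ (endpoint maximum of $r\mapsto u(r,\theta_0)$). Using the elementary polar identities $zF'(z)=r\,\partial_rF$ and $z^2F''(z)=r^2\,\partial_{rr}F$ for an analytic function $F$, together with the polar Cauchy--Riemann relation $\partial_\theta(\RE F)=-r\,\partial_r(\IM F)$, applied to $F=\log g$, one obtains $\IM\left(z_0g'(z_0)/g(z_0)\right)=-u_\theta(r_0,\theta_0)=0$ and $\RE\left(z_0g'(z_0)/g(z_0)\right)=r_0u_r(r_0,\theta_0)\geq0$. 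Hence $z_0f'(z_0)/f(z_0)=n+z_0g'(z_0)/g(z_0)=n+r_0u_r(r_0,\theta_0)=:m$ is real with $m\geq n$; in particular $f'(z_0)\neq0$ since $m\geq n\geq1$ and $f(z_0)\neq0$.

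\emph{Second conclusion.} Put $P(z):=zf'(z)/f(z)$, analytic near $z_0$ with $P(z_0)=m>0$. Logarithmic differentiation of $f'(z)=P(z)f(z)/z$ gives $1+zf''(z)/f'(z)=P(z)+zP'(z)/P(z)$, so $\RE\left(1+z_0f''(z_0)/f'(z_0)\right)=m+\tfrac1m\RE\left(z_0P'(z_0)\right)$, and it suffices to prove $\RE(z_0P'(z_0))\geq0$. Writing $P=n+z(\log g)'$ and using $zP'(z)=z(\log g)'(z)+z^2(\log g)''(z)$ together with the polar identities above, one finds $\RE(z_0P'(z_0))=r_0u_r(r_0,\theta_0)+r_0^2u_{rr}(r_0,\theta_0)$. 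Finally, harmonicity of $u$ gives $u_{rr}+\tfrac1r u_r+\tfrac1{r^2}u_{\theta\theta}=0$, whence at $(r_0,\theta_0)$ we get $r_0^2u_{rr}+r_0u_r=-u_{\theta\theta}(r_0,\theta_0)\geq0$ because $u_{\theta\theta}(r_0,\theta_0)\leq0$; thus $\RE(z_0P'(z_0))\geq0$ and the second conclusion follows. Beyond the maximum‑modulus reduction noted above, the only care needed is in the bookkeeping of the polar derivative identities; the potentially degenerate situations ($f$ constant, $z_0=0$, $f'(z_0)=0$, or $g$ vanishing near $z_0$) are all excluded by the standing hypotheses.
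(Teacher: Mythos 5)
This lemma is quoted in the paper from Miller and Mocanu's monograph without proof, so there is no in-paper argument to compare against; judged on its own, your proof is correct and complete. Your reduction to $g=f/z^n$, the use of the maximum modulus principle to upgrade the boundary maximum to $|g(z)|\le|g(z_0)|$ on the whole closed disc, and the first/second order conditions $u_\theta=0$, $u_{\theta\theta}\le 0$, $u_r\ge 0$ for $u=\log|g|$ are exactly what is needed: the identity $z_0f'(z_0)/f(z_0)=n+r_0u_r$ gives $m\ge n$, and combining $\RE\bigl(1+z_0f''(z_0)/f'(z_0)\bigr)=m+\tfrac1m(r_0u_r+r_0^2u_{rr})$ with the polar Laplace equation $r^2u_{rr}+ru_r=-u_{\theta\theta}\ge0$ gives the second inequality. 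This is essentially the classical argument (Jack, Miller--Mocanu), which is usually organized by differentiating $\theta\mapsto|f(r_0e^{i\theta})|^2$ twice and running a separate radial Schwarz-type estimate for $m\ge n$; your formulation via harmonicity of $\log|g|$ is a tidy repackaging that derives both conclusions from one function, at the mild cost of having to justify smoothness of $u$ near $(r_0,\theta_0)$ (which you do, via $g(z_0)\neq0$ and analyticity of $f$ at $z_0$).
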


\section{Main Results}
We begin with the following result.
  \begin{lemma}\label{mainn}
	Let $\delta\in[0,1],$ $h\in\mathcal{Q}$ with $0\in\overline{h(\mathbb{D})}$ and $\Theta,\Phi\in\mathcal{H}$ be such that $\Theta(0)=1,$ $\RE \Phi(z)>0\;(z\in\mathbb{D})$ and
	\begin{equation}\label{hypo}
		\RE\left( \Phi(z)+\dfrac{h(\zeta)}{\zeta h'(\zeta)}(\Theta(z)-1)\right)>0,\quad z\in\mathbb{D},\;\zeta\in\partial\mathbb{D}.
	\end{equation}
	If $p\in\mathcal{H}(\delta;\Theta,\Phi),$ $p(0)=h(0)$ and $H_{\delta;\Theta,\Phi,p}\prec h,$ then $p\prec h.$
\end{lemma}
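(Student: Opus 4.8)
The plan is to argue by contradiction using the boundary-point maximum principle encapsulated in Lemma~\ref{jack}. Suppose $p\not\prec h$. Since $h$ is convex (hence univalent) and $p(0)=h(0)$, a standard separation lemma for subordination (Miller--Mocanu, Theorem~2.3c in~\cite{ds}) guarantees the existence of points $z_0\in\mathbb{D}$ and $\zeta_0\in\partial\mathbb{D}\setminus E(h)$, together with an $m\ge1$, such that $p(z_0)=h(\zeta_0)$, $z_0p'(z_0)=m\,\zeta_0 h'(\zeta_0)$, and $p(\overline{\mathbb{D}}_{|z_0|})\subset h(\mathbb{D})$. The aim is to show that $H_{\delta;\Theta,\Phi,p}(z_0)$ then lies outside $h(\mathbb{D})$, contradicting $H_{\delta;\Theta,\Phi,p}\prec h$.

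The key computation is to evaluate the harmonic-mean expression at $z_0$. Writing $P_t := P_{t;\Theta,\Phi,p}$, we have $P_0(z_0)=p(z_0)=h(\zeta_0)$ and
\[
P_1(z_0)=\Theta(z_0)p(z_0)+\Phi(z_0)z_0p'(z_0)=\Theta(z_0)h(\zeta_0)+m\,\Phi(z_0)\,\zeta_0 h'(\zeta_0).
\]
Then
\[
H_{\delta;\Theta,\Phi,p}(z_0)=\frac{P_0(z_0)P_1(z_0)}{P_{1-\delta}(z_0)}
=\frac{h(\zeta_0)\bigl(\Theta(z_0)h(\zeta_0)+m\Phi(z_0)\zeta_0 h'(\zeta_0)\bigr)}{(1-\delta+\delta\Theta(z_0))h(\zeta_0)+\delta\Phi(z_0)z_0p'(z_0)}.
\]
After substituting $z_0p'(z_0)=m\zeta_0h'(\zeta_0)$ and simplifying, this should reduce to $h(\zeta_0)$ plus a correction term of the form $h(\zeta_0)$ times a quantity whose real part is controlled by hypothesis~\eqref{hypo}. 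Concretely, I expect $H_{\delta;\Theta,\Phi,p}(z_0)=h(\zeta_0)+\mu\,\zeta_0h'(\zeta_0)$ for a suitable $\mu$ with $\RE\mu>0$ (or a closely related normal-direction expression), obtained by factoring $h(\zeta_0)$ out of numerator and denominator and using that $\RE\bigl(\Phi(z_0)+\tfrac{h(\zeta_0)}{\zeta_0h'(\zeta_0)}(\Theta(z_0)-1)\bigr)>0$ together with $m\ge1$.

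Once the expression is in the form $h(\zeta_0)+\mu\,\zeta_0h'(\zeta_0)$ with $\RE\mu>0$, I invoke the geometric fact that for a convex domain $h(\mathbb{D})$ with $h(\zeta_0)$ a smooth boundary point, the outward normal at $h(\zeta_0)$ points in the direction $\zeta_0h'(\zeta_0)$; hence moving from $h(\zeta_0)$ in a direction making an acute angle with $\zeta_0h'(\zeta_0)$ leaves the domain. This yields $H_{\delta;\Theta,\Phi,p}(z_0)\notin h(\mathbb{D})$, the desired contradiction. One must also handle the degenerate case $P_{1-\delta}(z_0)=0$ separately, using the limiting definition in~\eqref{5} and the hypothesis $0\in\overline{h(\mathbb{D})}$; the condition $\RE\Phi>0$ together with~\eqref{hypo} should in fact prevent $P_{1-\delta}(z_0)$ from vanishing at such an extremal $z_0$, since $\RE\bigl(P_{1-\delta}(z_0)/h(\zeta_0)\bigr)$ will be positive.

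The main obstacle I anticipate is the algebraic simplification showing that the quotient genuinely takes the form $h(\zeta_0)$ plus an acute-direction perturbation of $\zeta_0h'(\zeta_0)$: one has to divide numerator and denominator by $h(\zeta_0)$, track the real part of the resulting rational expression in $\mu$ using $m\ge1$ and $\RE\Phi>0$, and verify monotonicity in $m$ so that the worst case $m=1$ still satisfies~\eqref{hypo}. Care is also needed because $\Theta$ and $\Phi$ are evaluated at the interior point $z_0$ while $h$ and $h'$ are evaluated at the boundary point $\zeta_0$, which is exactly why hypothesis~\eqref{hypo} is phrased as a joint condition over $z\in\mathbb{D}$ and $\zeta\in\partial\mathbb{D}$; the proof must use it in precisely that form.
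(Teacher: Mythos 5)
Your overall strategy matches the paper's: assume $p\not\prec h$, produce the boundary contact point $z_0$, $\zeta_0$, $m\ge 1$ via the Miller--Mocanu type lemma, and show the harmonic-mean expression at $z_0$ leaves $h(\mathbb{D})$. The decomposition $y=P_1(z_0)=h(\zeta_0)+\zeta_0h'(\zeta_0)\Psi(z_0)$ with $\RE\Psi>0$ (using \eqref{hypo} and $m\ge1$) is exactly the paper's equation \eqref{y}--\eqref{9}. But the heart of the proof --- showing that the quotient $P_0(z_0)P_1(z_0)/P_{1-\delta}(z_0)$ actually lies outside $h(\mathbb{D})$ --- is precisely the step you leave as an expectation (``this should reduce to\dots I expect\dots''), and the form you expect is not obtainable by algebra alone. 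Writing $w=h(\zeta_0)$, $v=\zeta_0h'(\zeta_0)$, one gets
\begin{equation*}
H_{\delta;\Theta,\Phi,p}(z_0)=w+v\cdot\frac{(1-\delta)w\Psi(z_0)}{w+\delta v\Psi(z_0)},
\end{equation*}
and there is no direct way to conclude that the real part of the fraction is positive from $\RE\Psi>0$ alone: the claim ``harmonic mean of two points of a half plane stays in the half plane'' is \emph{false} in general (e.g.\ $x=-1+i$, $y=-1-i$ both satisfy $\RE\omega\ge-1$, yet $2xy/(x+y)=-2$ does not). It becomes true exactly when the half plane does not contain the origin, which is where the hypothesis $0\in\overline{h(\mathbb{D})}$ enters: since the supporting open half plane $\mathbb{P}$ at $h(\zeta_0)$ is disjoint from $h(\mathbb{D})$, it cannot contain $0$, and then the half-plane harmonic-mean lemma (Lemma~2.1 of~\cite{harmonic}) gives $H_{\delta;\Theta,\Phi,p}(z_0)\in\overline{\mathbb{P}}$, hence outside $h(\mathbb{D})$. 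You invoke $0\in\overline{h(\mathbb{D})}$ only for the degenerate case, so the essential role of this hypothesis --- and the key lemma that makes the main step work --- is missing from your argument.

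A second, smaller gap: your treatment of $P_{1-\delta}(z_0)=0$ asserts that $\RE\bigl(P_{1-\delta}(z_0)/h(\zeta_0)\bigr)$ ``will be positive,'' but $P_{1-\delta}(z_0)=\delta x+(1-\delta)y$ is only guaranteed to lie in $\overline{\mathbb{P}}$, and $0$ may lie on $\partial\mathbb{P}$, so vanishing is not excluded a priori. The paper instead uses analyticity of $H_{\delta;\Theta,\Phi,p}$ (finiteness of the limit in \eqref{5}) to force $xy=0$ and then rules out $x=0$ with $\delta<1$ and $y=0$ outright, reducing to the case $\delta=1$, $H_1=y\in\mathbb{P}$. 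You would need to supply this case analysis to close the argument.
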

\begin{proof}
	From~\eqref{5} it is easy to see that $H_{0;\Theta,\Phi,p}= p,$ therefore the implication given in the hypothesis holds when $\delta=0$. Now let us take $\delta\in(0,1].$ If $p= p(0)\in\mathcal{H}(\delta;\Theta,\Phi),$ then using the fact that $\Theta(0)=1$ and from~\eqref{5}, we have $p(0)\in h(\mathbb{D})$ and thus in this case, the implication holds obviously. Now let $p\in\mathcal{H}(\delta;\Theta,\Phi)$ be a nonconstant function and define
	\begin{equation}\label{6}
		x:=p(z_0)\quad \text{and}\quad y:=p(z_0)\Theta(z_0)+z_0p'(z_0)\Phi(z_0).
	\end{equation}
	Since $h\in\mathcal{Q},$ therefore $h'(\zeta_0)\neq 0$ exists. Let $p\nprec h,$ then from~\cite[Lemma~2.2]{geo} and~\cite[Lemma~2.3]{harmonic}, we have $z_0\in\mathbb{D}\backslash\{0\}$ and $\zeta_0\in\partial\mathbb{D}\backslash E(h)$ such that
	\begin{equation}\label{7}
		p(\mathbb{D}_{|z_0|})\subset h(\mathbb{D}),\quad p(z_0)=h(\zeta_0)
	\end{equation}
	and
	\begin{equation}\label{8}
		z_0p'(z_0)=m\zeta_0h'(\zeta_0)\quad\text{for some}\;m\geq 1.
	\end{equation}
	Using~\eqref{7} and~\eqref{8} in~\eqref{6}, we obtain
	\begin{equation*}
		x=h(\zeta_0)\quad\text{and}\quad y=h(\zeta_0)\Theta(z_0)+m\zeta_0h'(\zeta_0)\Phi(z_0).
	\end{equation*}
	Let $\mathbb{P}$ be an open half plane, which supports the convex domain $h(\mathbb{D})$ at $h(\zeta_0).$ So
	\begin{equation}\label{x}
		x=h(\zeta_0)\in\overline{\mathbb{P}}\quad\text{and}\quad h(\mathbb{D})\cap\mathbb{P}=\phi.
	\end{equation}
	We observe that
	\begin{eqnarray}\label{y}
		y = h(\zeta_0)\Theta(z_0)+m\zeta_0h'(\zeta_0)\Phi(z_0) =h(\zeta_0)+\zeta_0h'(\zeta_0)\Psi(z_0),
	\end{eqnarray}
	where
	\begin{equation}\label{9}
		\Psi(z):=m\Phi(z_0)+\dfrac{h(\zeta_0)}{\zeta_0h'(\zeta_0)}(\Theta(z_0)-1).
	\end{equation}
	Clearly~\eqref{hypo} together with the fact that $m\geq 1$ and $\RE\Phi(z)>0$ implies $\RE\Psi(z)>0.$ Using this along with~\eqref{y}, we can say that $y\in\mathbb{P}.$ For $x,y\in\mathbb{P}$ and $\delta\in(0,1],$ it follows from~\cite[Lemma~2.1]{harmonic} that the harmonic mean of $x$ and $y,$ $H_{\delta;\Theta,\Phi,p}(z_0)\in\overline{\mathbb{P}}$ provided $y+\delta(x-y)\neq 0.$ Taking into account~\eqref{x}, it follows that $H_{\delta;\Theta,\Phi,p}(z_0)\notin h(\mathbb{D}),$ which contradicts the hypothesis and thus the result holds in this case.\\
	\indent For the case when $y+\delta(x-y)=0,$ we have $P_{1-\delta;\Theta,\Phi,p}(z_0)=0.$ By Definition~\ref{defH}, we know that the limit
	\begin{equation*}
		H_{\delta;\Theta,\Phi,p}(z_0)= \lim_{\mathbb{D}\backin\zeta\rightarrow z_0}\dfrac{P_{0;\Theta,\Phi,p}(\zeta)P_{1;\Theta,\Phi,p}(\zeta)}{P_{1-\delta;\Theta,\Phi,p}(\zeta)}
	\end{equation*}
	is finite and so $P_{0;\Theta,\Phi,p}(z_0)P_{1;\Theta,\Phi,p}(z_0)=xy=0.$ First let us suppose that $x=0,$ which implies $(1-\delta)y=0.$ Since $x=0,$ $y$ reduces to $y=m\zeta_0h'(\zeta_0)\Phi(z_0).$ Clearly $y$ can not be zero as $h'(\zeta_0)\neq 0$ and $\RE \Phi(z)>0.$ Hence $x=0$ if and only if $\delta=1.$ Now we observe from Definition~\ref{defH},
	\begin{equation*}
		H_{1;\Theta,\Phi,p}(z_0)=P_{1;\Theta,\Phi,p}(z_0)=y\in\mathbb{P},
	\end{equation*}
	which further implies $H_{1;\Theta,\Phi,p}(z_0)\notin h(\mathbb{D}).$ This is a contradiction to hypothesis and the result follows. Next let us suppose that $y=0,$ then we have $\delta x=0.$ Since $\delta\in(0,1],$ it follows that $x=0$ and thus $y$ becomes $y=m\zeta_0h'(\zeta_0)\Phi(z_0),$ which can never be equal to 0. Therefore such a case is never possible. This completes the proof.
\end{proof}
\begin{remark}
	If we take $t=1/2,$ $\Theta(z)=1$ and $\Phi(z)=1$ in $H_{t;\Theta,\Phi,p}(z),$ from Definition~\ref{defH} we can say that it reduces to
	\begin{equation*}
		H_{1/2;1,1,p}(z)=\dfrac{2p(z)(p(z)+zp'(z))}{2p(z)+zp'(z)}=:P(z).
	\end{equation*}
	Further if we take $h(z)=(1+z)/(1-z)$ in Lemma~\ref{mainn}, it reduces to a result of Kanas and Tudor~\cite[Theorem~2.1]{kanasbmms}.
\end{remark}
\begin{remark}
	If $h(z)=((1+z)/(1-z))^{\gamma},$ where $\gamma\in(0,1],$ then for $t=1/2,$ $\Theta(z)=1$ and $\Phi(z)=1,$ Lemma~\ref{mainn} reduces to~~\cite[Theorem~2.6]{kanasbmms}.
\end{remark}
Some applications of the above theorem are discussed in the next section. So far in this direction, authors have proved differential subordination implications of the form
\begin{equation*}
	\psi(p(z),zp'(z))\prec h(z)\Rightarrow p(z)\prec h(z)
\end{equation*}
and the above theorem generalizes many such results in case of harmonic mean. Henceforth, we consider differential subordinations of the form
\begin{equation*}
	\psi(p(z),zp'(z))\prec h(z)\Rightarrow p(z)\prec q(z)
\end{equation*}
for different choices of $h,$ $q$ and $\psi.$ We enlist below a few examples.
\begin{example}
Let $p(z)=1+a_1z+a_2z^2+\cdots$ be analytic in $\mathbb{D}$ with $p(z)\not\equiv1.$ Then $p(z)\prec e^z,$ whenever
\begin{equation*}
  \dfrac{2p(z)(p(z)+zp'(z))}{2p(z)+zp'(z)}\prec \phi_i(z)\;(i=1,2,..,5),
\end{equation*}
where $\phi_1(z)=\sqrt{1+z},\;\phi_2(z)=2/(1+e^{-z}),\;\phi_3(z)=z+\sqrt{1+z^2},\;\phi_4(z)=1+\sin{z}$ and $\phi_5(z)=1+4z/3+2z^2/3.$
\end{example}
\begin{proof}
Let $q(z)=e^z$ and $\Omega_i=\phi_i(\mathbb{D})\;(i=1,2..,5)$. Suppose that  $\psi:\mathbb{C}^2\times\mathbb{D}\rightarrow\mathbb{C}$ be a function defined by $\psi(a,b;z)=2a(a+b)/(2a+b).$ Using the admissibility conditions for $e^z$ given by Naz et al.~\cite{adiba}, it is sufficient to prove that $\psi\in\Psi[\Omega_i,e^z],$ or equivalently, $\psi(r,s;z)\notin\Omega,$ where $r=e^{e^{i\theta}}$ and $s=me^{i\theta}e^{e^{i\theta}}=mre^{i\theta}\;(-\pi\leq\theta\leq\pi).$ We observe that
 \begin{equation*}
  \psi(r,s;z)=\dfrac{2r(r+s)}{2r+s}=\dfrac{2r(r+mre^{i\theta})}{2r+mre^{i\theta}}=2e^{e^{i\theta}}\left(1-\dfrac{1}{2+me^{i\theta}}\right).
\end{equation*}
Therefore
\begin{eqnarray*}
  \RE\psi(r,s;z)&=&\dfrac{2e^{\cos{\theta}} \left(\left(m^2+3 m \cos{\theta}+2\right) \cos{(\sin{\theta})}-m \sin {\theta} \sin{(\sin{\theta})}\right)}{m^2+4 m \cos (\theta)+4}\\
  &=:&g(\theta),
\end{eqnarray*}
which at $\theta=0$ becomes $g(0)=2e(m^2+3m+2)/(5+4m).$ Since $m\geq 1,$ we have $g(0)\geq 4e/3.$ Thus it is easy to conclude that $\psi(r,s,z)\notin\Omega_i\;(i=1,2,..,5)$ and the result follows.
\end{proof}
\begin{example}
Let $p(z)=1+a_1z+a_2z^2+\cdots$ be analytic $\mathbb{D}$ with $p(z)\not\equiv1.$ Then $p(z)\prec \sqrt{1+z},$ whenever
\begin{equation*}
  \dfrac{2p(z)(p(z)+zp'(z))}{2p(z)+zp'(z)}\prec\dfrac{2}{1+e^{-z}}.
\end{equation*}
\end{example}
\begin{proof}
For $\phi(z)=2/(1+e^{-z}),$ let us take $\Omega=\phi(\mathbb{D})$ and $q(z)=\sqrt{1+z}$. Now let  $\psi:\mathbb{C}^2\times\mathbb{D}\rightarrow\mathbb{C}$ be defined as $\psi(a,b;z)=2a(a+b)/(2a+b).$ Then by using the admissibility conditions for $\sqrt{1+z}$ given by Madaan et al.~\cite{vibha}, it suffices to show that $\psi\in\Psi[\Omega,\sqrt{1+z}],$ which is equivalent to, $\psi(r,s;z)\notin\Omega,$ where $r=\sqrt{2\cos{2\theta}}e^{i\theta}$ and $s=me^{3i\theta}/(2\sqrt{2\cos{2\theta}})=me^{2i\theta}/(2r)\;(-\pi/4\leq\theta\leq\pi/4).$ We observe that
 \begin{equation*}
  \psi(r,s;z)=\dfrac{2r(r+s)}{2r+s}=\dfrac{2r\left(r+\dfrac{me^{2i\theta}}{2r}\right)}{2r+\dfrac{me^{2i\theta}}{2r}}=2\sqrt{2\cos{2\theta}}e^{i\theta}\left(\dfrac{m+4\cos{2\theta}}{m+8\cos{2\theta}}\right).
\end{equation*}
Therefore
\begin{equation*}
  \RE\psi(r,s;z)=2\sqrt{2\cos{2\theta}}\cos{\theta}\left(\dfrac{m+4\cos{2\theta}}{m+8\cos{2\theta}}\right),
\end{equation*}
which is an increasing function of $m.$ So for $m\geq 1,$ we have
\begin{equation*}
 \RE\psi(r,s;z)\geq2\sqrt{2\cos{2\theta}}\cos{\theta}\left(\dfrac{1+4\cos{2\theta}}{1+8\cos{2\theta}}\right)=:g(\theta),
\end{equation*}
which at $\theta=0$ becomes $g(0)=10\sqrt{2}/9\approx 1.57.$ Since $\max\RE(2/(1+e^{-z}))\leq 2e/(1+e)\approx 1.46,$ it is easy to conclude that $\psi(r,s,z)\notin\Omega$ and the result follows.
\end{proof}
\begin{example}
Let $p(z)=1+a_1z+a_2z^2+\cdots$ be analytic $\mathbb{D}$ with $p(z)\not\equiv1.$ Then $p(z)\prec 2/(1+e^{-z}),$ whenever
\begin{equation*}
  \dfrac{2p(z)(p(z)+zp'(z))}{2p(z)+zp'(z)}\prec\sqrt{1+z}.
\end{equation*}
\end{example}
\begin{proof}
Let us suppose $q(z)=2/(1+e^{-z})$ and $\Omega=\phi(\mathbb{D}),$ with $\phi(z)=\sqrt{1+z}$. Also, let  $\psi:\mathbb{C}^2\times\mathbb{D}\rightarrow\mathbb{C}$ is a function given by $\psi(a,b;z)=2a(a+b)/(2a+b).$ By applying the admissibility conditions for $2/(1+e^{-z})$ given by Kumar and Goel~\cite{pri3}, it is sufficient to prove that $\psi\in\Psi[\Omega,2/(1+e^{-z})],$ which means $\psi(r,s;z)\notin\Omega,$ with $r=2/(1+e^{-e^{i\theta}})$ and $s=(2me^{i\theta}e^{-e^{i\theta}})/(1+e^{-e^{i\theta}})^2=(mre^{i\theta}e^{-e^{i\theta}})/(1+e^{-e^{i\theta}})\;(-\pi\leq\theta\leq\pi).$ We observe that
 \begin{equation*}
  \psi(r,s;z)=\dfrac{2r(r+s)}{2r+s}=\dfrac{4}{1+e^{-e^{i\theta}}}\left(\dfrac{1+e^{-e^{i\theta}}+me^{i\theta}e^{-e^{i\theta}}}{2+2e^{-e^{i\theta}}+me^{i\theta}e^{-e^{i\theta}}}\right).
\end{equation*}
Then
\begin{equation*}
  \RE\psi(r,s;z)=\dfrac{N(\theta)}{D(\theta)}=:g(\theta),
\end{equation*}
where
\begin{eqnarray*}
N(\theta)&=&m^2 e^{\cos \theta}+m^2 \cos (\sin \theta)+5 m e^{\cos \theta} \cos \theta+3 m e^{2 \cos \theta} \cos (\theta-\sin \theta)\\
& &+m \cos (\theta-\sin \theta)+2 m \cos (\theta+\sin \theta)+m e^{\cos \theta} \cos (\theta-2 \sin \theta)+4 e^{\cos \theta}\\
& &+2 e^{3 \cos \theta}+6 e^{2 \cos \theta} \cos (\sin \theta)+2 \cos (\sin \theta)+2 e^{\cos \theta} \cos (2 \sin \theta)
\end{eqnarray*} and \begin{eqnarray*}
D(\theta)&=&(1+e^{2\cos{\theta}}+2e^{\cos{\theta}}\cos{(\sin{\theta})})(4+m^2+4e^{2\cos{\theta}}+4m\cos{\theta}\\
& &+8e^{\cos{\theta}}\cos{(\sin{\theta})}+4me^{\cos{\theta}}\cos{(\theta-\sin{\theta})}).
\end{eqnarray*}
We observe that $g(0)=4e(1+e+m)/(1+e)(2+2e+m)$ and $m\geq 1,$ so we have $g(0)\geq 4e(2+e)/(1+e)(3+2e)\approx 1.635.$ Thus it is easy to conclude that $\psi(r,s;z)\notin\Omega$ and the result follows.
\end{proof}
 \begin{theorem}
 Let $-1\leq B<A\leq 1,$ $-1\leq E<D\leq 1,$ $L:=k+2,$ $M:=2(A+B)+k(2A-B),$ $N:=2AB,$ $G:=2(E-D),$ $H:=2AE(k+2)-2BE(k-1)-AD(k+2)+BD(k-4),$ $I:=2A^2E(k+2)-2ABE(k-2)-ABD(k+4)+B^2D(k-2)$ and $J:=2A^2BE-2AB^2D$ with
\begin{equation}\label{3}
  2E(1+A)-D(1+B))>0.
\end{equation}
 In addition, assume
\begin{equation}\label{2}
GH+HI-3GJ+IJ+12GJ\geq 4|GI+HJ|\quad (k\geq 1)
\end{equation}
and
\begin{equation}\label{4}
  3+2AB+D(B+1)(A(2B+3)+B+2)\geq 2 E(A+1)(A(B+2)+1)+|4A+B|.
\end{equation}
   Let $p(z)=1+a_1z+a_2z^2+\cdots$ be analytic in $\mathbb{D}$ with $p(z)\not\equiv 1.$ Then
   \begin{equation*}
     \dfrac{2p(z)(p(z)+zp'(z))}{2p(z)+zp'(z)}\prec\dfrac{1+Dz}{1+Ez} \quad\Rightarrow\quad p(z)\prec\dfrac{1+Az}{1+Bz}.
   \end{equation*}
 \end{theorem}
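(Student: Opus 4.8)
The plan is to establish the implication via the theory of admissible functions, exactly in the spirit of the preceding examples. Put $\psi(a,b;z):=2a(a+b)/(2a+b)$, so that the hypothesis reads $\psi(p(z),zp'(z))\prec h(z)$ with $h(z):=(1+Dz)/(1+Ez)$, and write $q(z):=(1+Az)/(1+Bz)$. Since $-1\le B<A\le 1$, $q$ is a M\"obius map of $\mathbb{D}$ onto a disc (a half-plane when $B=-1$), hence convex univalent, indeed $q\in\mathcal{Q}$, with $q(0)=1=p(0)$ and $p\in\mathcal{H}[1,1]$. By the admissibility criterion of Miller and Mocanu~\cite{ds} it is then enough to show $\psi\in\Psi[\Omega,q]$, where $\Omega:=h(\mathbb{D})$; equivalently, with $\zeta_0=e^{i\theta}\in\partial\mathbb{D}\setminus E(q)$, $r:=q(\zeta_0)$, $s:=m\zeta_0 q'(\zeta_0)$ and $m\ge 1$, one must verify that $\psi(r,s;z)=2r(r+s)/(2r+s)\notin\Omega$.

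Making this explicit is the first step. From $q'(z)=(A-B)/(1+Bz)^2$ one gets $r=(1+Ae^{i\theta})/(1+Be^{i\theta})$ and $s=m(A-B)e^{i\theta}/(1+Be^{i\theta})^2$; substituting into $2r(r+s)/(2r+s)$ and simplifying (the powers of $1+Be^{i\theta}$ cancel down), $\psi(r,s;z)$ becomes a ratio of polynomials in $e^{i\theta}$ whose coefficients are built from $A$, $B$ and $m$ --- this is where the auxiliary data $L,M,N$ enter. On the target side, since $-1\le E<D\le1$, the map $h$ carries $\mathbb{D}$ onto the disc with centre $(1-DE)/(1-E^2)$ and radius $(D-E)/(1-E^2)$ when $|E|<1$, and onto the half-plane $\{\,\RE w>(1-D)/2\,\}$ when $E=-1$; condition~\eqref{3} encodes the sign information ($2E(1+A)-D(1+B)>0$) that pins down the position of $\Omega$ relative to the real values of $\psi(r,s;z)$, in particular its value at $\theta=0$.

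The core of the argument is then the exclusion $\psi(r,s;z)\notin\Omega$. In the disc case this reads $|(1-E^2)\psi(r,s;z)-(1-DE)|\ge D-E$; clearing denominators, squaring, and collecting everything as a polynomial in $t:=\cos\theta\in[-1,1]$ turns it into an inequality $P(t;m)\ge 0$ whose coefficients are the quantities $G,H,I,J$ of the statement. Since the resulting expression is at most quadratic in $m$, the parameter $m\ge 1$ is eliminated either by monotonicity (reducing to $m=1$) or by treating $P(\,\cdot\,;m)$ directly as a quadratic in $m$, after which what remains is a polynomial inequality in $t\in[-1,1]$; condition~\eqref{2}, whose term $4|GI+HJ|$ is precisely the bound on the cross-term, is the non-negativity/discriminant condition guaranteeing $P(t;m)\ge 0$ for all admissible $t$ and $m$, while condition~\eqref{4} disposes of the extremal value $t=1$ (i.e. $\theta=0$), where $\psi(r,s;z)$ is closest to $\Omega$. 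Combining these gives $\psi\in\Psi[\Omega,q]$ and hence $p\prec q$. When $E=-1$ the same proof applies with the disc-exclusion inequality replaced by $\RE\psi(r,s;z)\le(1-D)/2$.

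The main obstacle is, as always with results of this kind, the final inequality: after clearing denominators one is left with a polynomial in $\cos\theta$ carrying $m$-dependent coefficients, and the delicate part is to locate the worst case in both variables at once and to check that the stated hypotheses~\eqref{2} and~\eqref{4} --- and nothing stronger --- already force $P(t;m)\ge 0$. Getting the coefficient identification right, so that the quantities $L,M,N,G,H,I,J$ as defined really are the ones appearing in $P(t;m)$, is where the bookkeeping must be done carefully.
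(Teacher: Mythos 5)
Your overall strategy---push $p$ to the boundary of $q(\mathbb{D})$, parametrize the boundary data as $r=q(e^{i\theta})$, $s=m e^{i\theta}q'(e^{i\theta})$ with $m\ge 1$, and show that $2r(r+s)/(2r+s)$ lands outside $h(\mathbb{D})$---is exactly the paper's; the paper phrases it through the Schwarz function $\omega=(p-1)/(A-Bp)$ and Ruscheweyh's lemma, which yields the identical parametrization, and writes the exclusion as $|(P(z_0)-1)/(D-EP(z_0))|\ge 1$ rather than as a distance-from-centre inequality, but these are the same condition. The genuine gap is that everything after this setup is deferred: you never verify that $L,M,N,G,H,I,J$ are the coefficients that actually arise, and, more importantly, you never show that \eqref{2}, \eqref{3}, \eqref{4} force the final exclusion. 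That verification is the entire content of the theorem, and your guesses about how the hypotheses enter do not match the mechanism that works. You propose to ``locate the worst case in both variables at once,'' treating the cleared inequality as a single polynomial $P(t;m)\ge 0$ with \eqref{2} playing the role of a discriminant bound and \eqref{4} handling $t=1$; that joint optimisation is not what the hypotheses are built for and would not close as described.

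What the paper actually does is decouple the numerator and denominator of
\[ \left|\dfrac{P(z_0)-1}{D-EP(z_0)}\right|=(A-B)\left|\dfrac{L+Me^{i\theta}+Ne^{2i\theta}}{G+He^{i\theta}+Ie^{2i\theta}+Je^{3i\theta}}\right|. \]
The numerator is bounded \emph{below} by $(L-|M|+N)^2$ using the elementary minimum of a quadratic in $t=\cos\theta$ on $[-1,1]$ (no hypothesis needed); condition \eqref{2} is used to show that $|G+He^{i\theta}+Ie^{2i\theta}+Je^{3i\theta}|^2$, a cubic in $t$, is \emph{increasing} on $[-1,1]$, so the denominator is bounded \emph{above} by its value $(G+H+I+J)^2$ at $\theta=0$; condition \eqref{3} then makes the resulting lower bound $\psi(k)=\left((L-|M|+N)/(G+H+I+J)\right)^2$ an increasing function of $k$, reducing everything to $k=1$; and condition \eqref{4} is precisely the statement that the $k=1$ value is at least $1$. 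The two extremisations occur at different values of $\theta$, which is why the argument closes without any joint worst-case analysis. Until you supply these estimates (or a workable substitute), what you have is a plan, not a proof.
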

 \begin{proof}
  Define $P(z)$ by
  \begin{equation*}
    P(z):=\dfrac{2p(z)(p(z)+zp'(z))}{2p(z)+zp'(z)}\quad\text{and}\quad\omega(z):=\dfrac{p(z)-1}{A-Bp(z)},
  \end{equation*}
  or equivalently $p(z)=(1+A\omega(z))/(1+B\omega(z)).$ Then $\omega(z)$ is clearly meromorphic in $\mathbb{D}$ and $\omega(0)=0.$ We need to show that $|\omega(z)|<1$ in $\mathbb{D}.$ We have
 \begin{equation*}
   P(z)=\dfrac{2\left(\dfrac{1+A\omega(z)}{1+B\omega(z)}\right)\left(\dfrac{1+A\omega(z)}{1+B\omega(z)}+\dfrac{(A-B)z\omega'(z)}{(1+B\omega(z))^2}\right)}{2\left(\dfrac{1+A\omega(z)}{1+B\omega(z)}\right)+\dfrac{(A-B)z\omega'(z)}{(1+B\omega(z))^2}}.
 \end{equation*}
 Therefore
{\small \begin{equation*}
   \dfrac{P(z)-1}{D-E P(z)}=-\dfrac{(A-B)(2\omega(z)(1+A\omega(z))(1+B\omega(z))+(1+2A\omega(z)-B\omega(z))z\omega'(z))}{2(1+A\omega(z))(1+B\omega(z))\Phi_1(z)+(A-B)\Phi_2(z)z\omega'(z)},
 \end{equation*}}
 where
 \begin{equation*}
 \Phi_1(z)=E(1+A\omega(z))-D(1+B\omega(z))\;\text{and}\;\Phi_2(z)=2E(1+A\omega(z))-D(1+B\omega(z)).
 \end{equation*}
On the contrary if there exists a point $z_0\in\mathbb{D}$ such that $\max_{|z|\leq|z_0|}|\omega(z)|=|\omega(z_0)|=1,$ then by~\cite[Lemma~1.3]{rush}, there exists $k\geq1$ such that $z_0\omega'(z_0)=k\omega(z_0).$ Let $\omega(z_0)=e^{i\theta},$ then we have
 \begin{eqnarray*}
   \left|\dfrac{P(z_0)-1}{D-EP(z_0)}\right|=(A-B)\left|\dfrac{L+Me^{i\theta}+Ne^{2i\theta}}{G+He^{i\theta}+Ie^{2i\theta}+Je^{3i\theta}}\right|.
 \end{eqnarray*}
 We observe that
 \begin{equation*}
   |L+Me^{i\theta}+Ne^{2i\theta}|^2=L^2+M^2+N^2-2LN+2(L+N)M\cos{\theta}+4LN\cos^2{\theta}.
 \end{equation*}
Choose $t:=\cos{\theta}\in[-1,1].$ Since
 \begin{equation*}
   \min\{at^2+bt+c:-1\leq t\leq1\}=\begin{cases}
                                      \dfrac{4ac-b^2}{4a}, & \quad\text{if}\;a>0\; \text{and}\;|b|<2a \\
                                      a-|b|+c, & \quad\text{otherwise,}
                                    \end{cases}
 \end{equation*}
we have $|L+Me^{i\theta}+Ne^{2i\theta}|^2\geq (L-|M|+N)^2.$ Next we consider
\begin{eqnarray*}
  |G+He^{i\theta}+Ie^{2i\theta}+Je^{3i\theta}|^2&=&G^2+H^2+I^2+J^2-2GI-2HJ+(2GH\\
& &+2HI-6GJ+2IJ)\cos{\theta}+(4GI+4HJ)\cos^2{\theta}\\
& &+8GJ\cos^3{\theta},
\end{eqnarray*}
which is an increasing function of $t=\cos{\theta}\in[-1,1]$ in view of~\eqref{2}. Thus we have $|G+He^{i\theta}+Ie^{2i\theta}+Je^{3i\theta}|^2\leq (G+H+I+J)^2.$ Therefore
\begin{equation*}
  \left|\dfrac{P(z_0)-1}{D-EP(z_0)}\right|^2\geq \left(\dfrac{L-|M|+N}{G+H+I+J}\right)^2=:\psi(k) ,\end{equation*}
which in view of~\eqref{3} is an increasing function of $k.$ So we have $\psi(k)\geq\psi(1)$ and therefore
\begin{equation*}
\left|\dfrac{P(z_0)-1}{D-EP(z_0)}\right|\geq \dfrac{3+2AB-|4A+B|}{2E(A+1)(A(B+2)+1)-D(B+1)(A(2 B+3)+B+2)},
\end{equation*}
which in view of~\eqref{4} is greater than or equal to $1.$ This contradicts that $P(z)\prec(1+Dz)(1+Ez)$ and that completes the proof.
\end{proof}
\begin{note}
The fact that the equations~\eqref{3},~\eqref{2} and~\eqref{4} hold simultaneously is validated by the following set of values: $A=3/8,\;B=0,\;D=1,\;E=123/128$.
\end{note}
In the next two results, we consider a combination of harmonic mean, geometric mean and arithmetic mean of $p(z)$ and $p(z)+zp'(z)/p(z)$. Note that these results involve some constants given by $x=\IM p(z_0)$ and $y=z_0p'(z_0)/m,$ where $z_0$ and $m$ are as defined in Lemma~\ref{jack} for $f(z)=(p(z)-1)/(1-2\alpha+p(z)).$
\begin{theorem}\label{htheo1}
	Let $\gamma\in[0,1],$ $\alpha\in[0,1),$ $\delta\in[1,2],$ $\beta\in[0,1)$ and $\rho\in[0,1]$ be such that $\rho\geq \alpha(1+2\alpha),$ whenever $\alpha\in[0,1/2].$ Also, let $p$ be an analytic function with $p(0)=1$ and
\begin{equation*}
\RE\left(\gamma(p(z))^{\delta}+(1-\gamma)\dfrac{p(z)+\dfrac{zp'(z)}{p(z)}}{1+\rho\dfrac{zp'(z)}{p^2(z)}}\right)>\beta,
\end{equation*}
for $\beta\geq \gamma\alpha+(1-\gamma)\beta_0,$ where $\beta_0$ is given as follows:\\
\begin{equation}\label{h4}
	\beta_0=\begin{cases}\alpha\dfrac{(1+\alpha)(1-2\alpha)}{\rho(1-\alpha)-2\alpha^2},\quad &\text{if}\;I_1\;\text{holds}\\
		\alpha\quad &\text{if}\;(\sim I_1)\wedge I_2\;\text{holds}\\
		\alpha+\dfrac{\rho(1-\rho)(1-\alpha)(2(1-\alpha)+\rho)}{16\alpha(2\alpha^2-\rho(1-\alpha))} & \text{if}\;(\sim I_1)\wedge(\sim I_2)\wedge I_3\; \text{holds}\\
		\alpha+\dfrac{\rho(1-\rho)}{16\alpha(1-\alpha)}\left(\dfrac{2\alpha^2-\rho(1-\alpha)}{2(1-\alpha)+\rho}\right)&\text{if}\;(\sim I_1)\wedge(\sim I_2)\wedge(\sim I_3)\;\text{holds},
	\end{cases}
\end{equation}
with
\begin{equation}\label{h14}
I_1: 0\leq\alpha\leq 1/2,\;\;I_2:\alpha^2-x^2+\rho my>0\;\;\text{and}\;\;I_3:x^2\geq \tfrac{(2\alpha^2-\rho(1-\alpha))(1-\alpha)}{2(1-\alpha)+\rho},
\end{equation}
provided $x>0$ and $my\leq-\tfrac{(1-\alpha)^2+x^2}{2(1-\alpha)}.$ Then $\RE p(z)>\alpha.$
\end{theorem}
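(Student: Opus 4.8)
The plan is to argue by contradiction using Lemma~\ref{jack}, applied to the M\"obius transform $f(z):=(p(z)-1)/(1-2\alpha+p(z))$ singled out in the paragraph preceding the statement. Since $w\mapsto(w-1)/(1-2\alpha+w)$ maps the half-plane $\{\RE w>\alpha\}$ conformally onto $\mathbb{D}$ and $\RE p(0)=1>\alpha$, the function $f$ is analytic near the origin with $f(0)=0$. If $\RE p(z)>\alpha$ failed somewhere in $\mathbb{D}$ (in particular $p\not\equiv1$), there would be a smallest $r_0\in(0,1)$ with $\max_{|z|=r_0}|f(z)|=1$, attained at some $z_0$ (here $f$ stays analytic on $\overline{\mathbb{D}}_{r_0}$, since $\RE p\ge\alpha$ there keeps $p$ away from the pole $2\alpha-1$); Lemma~\ref{jack} then gives $m\ge1$ with $z_0f'(z_0)/f(z_0)=m$. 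The condition $|f(z_0)|=1$ forces $p(z_0)$ onto the boundary line, say $p(z_0)=\alpha+ix$ with $x\in\mathbb{R}$, and from $f'=2(1-\alpha)p'/(1-2\alpha+p)^2$ the identity $z_0f'(z_0)=mf(z_0)$ together with $(p(z_0)-1)(1-2\alpha+p(z_0))=-((1-\alpha)^2+x^2)$ yields $z_0p'(z_0)=my$ with $y=-((1-\alpha)^2+x^2)/(2(1-\alpha))<0$; since $m\ge1$ this in particular gives $my\le-((1-\alpha)^2+x^2)/(2(1-\alpha))$, as recorded in the statement. As $\RE F(z_0)$ below will depend on $x$ only through $x^2$, we may assume $x>0$ (the case $x=0$ being handled in the same way along the boundary of the range).

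Put $F(z):=(p(z)+zp'(z)/p(z))/(1+\rho\,zp'(z)/p^2(z))$. Since $\gamma\ge0$, $1-\gamma\ge0$ and $\beta\ge\gamma\alpha+(1-\gamma)\beta_0$, to contradict the hypothesis it is enough to establish
\[
\RE\big((p(z_0))^{\delta}\big)\le\alpha\qquad\text{and}\qquad\RE F(z_0)\le\beta_0 ,
\]
because then $\RE\big(\gamma(p(z_0))^{\delta}+(1-\gamma)F(z_0)\big)\le\gamma\alpha+(1-\gamma)\beta_0\le\beta$. The first inequality is elementary: if $\alpha=0$ then $\RE\big((ix)^{\delta}\big)=|x|^{\delta}\cos(\delta\pi/2)\le0$ because $\delta\in[1,2]$; and if $\alpha>0$, writing $p(z_0)=\alpha+ix=\alpha\sec\theta\,e^{i\theta}$ with $\theta=\arg p(z_0)\in(-\pi/2,\pi/2)$ gives $\RE\big((p(z_0))^{\delta}\big)=\alpha^{\delta}\sec^{\delta}\theta\cos(\delta\theta)$, and one verifies $\sec^{\delta}\theta\cos(\delta\theta)\le1$ (its $\theta$-derivative equals $\delta\sec^{\delta+1}\theta\sin((1-\delta)\theta)$, which has sign opposite to that of $\theta$, so the maximum is at $\theta=0$) together with $\alpha^{\delta-1}\le1$, whence $\RE\big((p(z_0))^{\delta}\big)\le\alpha$.

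The second inequality is the core of the proof. Rewriting $F=p(p^2+zp')/(p^2+\rho\,zp')$ and inserting $p(z_0)=\alpha+ix$, $z_0p'(z_0)=my$, a computation using $(\alpha^2-x^2)^2+4\alpha^2x^2=(\alpha^2+x^2)^2$ reduces the claim to
\[
\RE F(z_0)=\frac{\alpha\left[(\alpha^2+x^2)^2+my\left(\alpha^2(1+\rho)+x^2(1-3\rho)\right)+\rho\,m^2y^2\right]}{(\alpha^2+x^2)^2+2\rho\,my(\alpha^2-x^2)+\rho^2m^2y^2}\le\beta_0 .
\]
Substituting $y=-((1-\alpha)^2+x^2)/(2(1-\alpha))$ turns the right-hand side into a rational function of the two real parameters $x>0$ and $m\ge1$ with manifestly positive denominator, and the task is to show that its maximum equals $\beta_0$. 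I would carry out this maximization in two stages --- first in $m$ (equivalently over $my\le-((1-\alpha)^2+x^2)/(2(1-\alpha))$), then in $x$ --- and the four branches of~\eqref{h4} record the value of the maximum according to: whether $\alpha\le1/2$ (the condition $I_1$; here the hypothesis $\rho\ge\alpha(1+2\alpha)$ guarantees $\rho(1-\alpha)-2\alpha^2\ge0$, so the first branch is well defined and yields the relevant maximum), whether the real part $\alpha^2-x^2+\rho\,my$ of $p^2(z_0)+\rho\,z_0p'(z_0)$ is positive at the maximizer ($I_2$), and whether $x^2$ reaches the threshold $(2\alpha^2-\rho(1-\alpha))(1-\alpha)/(2(1-\alpha)+\rho)$ of $I_3$; in each branch the extremum occurs at an interior critical point or at the endpoint $m=1$, and a direct check matches its value with the corresponding entry of~\eqref{h4}. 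This case bookkeeping --- verifying the claimed maxima and the sign conditions delimiting the branches --- is the step I expect to be the main obstacle. Once it is done, combining $\RE F(z_0)\le\beta_0$ with $\RE\big((p(z_0))^{\delta}\big)\le\alpha$ contradicts $\RE\big(\gamma(p(z))^{\delta}+(1-\gamma)F(z)\big)>\beta$ at $z=z_0$, and therefore $\RE p(z)>\alpha$ on all of $\mathbb{D}$.
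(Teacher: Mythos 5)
Your overall strategy coincides with the paper's: argue by contradiction at a boundary point $z_0$ where $p(z_0)=\alpha+ix$ and $z_0p'(z_0)=my$ with $y=-((1-\alpha)^2+x^2)/(2(1-\alpha))$ and $m\ge1$ (the paper invokes Miller--Mocanu's Lemmas 2.2d and 2.2f for $q(z)=(1+(1-2\alpha)z)/(1-z)$ rather than Jack's lemma directly, but these are equivalent here), then split the functional into the $\gamma(p)^{\delta}$ part and the weighted-harmonic-mean part $F$ and bound their real parts by $\alpha$ and $\beta_0$ respectively. Your closed form for $\RE F(z_0)$ agrees exactly with the paper's expression $\alpha+(1-\rho)\,my\,\alpha(\alpha^2+x^2+\rho my)/\bigl((\alpha^2-x^2+\rho my)^2+4\alpha^2x^2\bigr)$, and your proof of $\RE\bigl((p(z_0))^{\delta}\bigr)\le\alpha$ via the monotonicity of $\sec^{\delta}\theta\cos(\delta\theta)$ is correct and in fact more explicit than the paper's logarithmic-spiral argument.

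The genuine gap is that the heart of the theorem --- the verification that $\RE F(z_0)\le\beta_0$ with $\beta_0$ given by the four branches of~\eqref{h4} --- is only announced ("I would carry out this maximization in two stages \dots a direct check matches its value"), and you yourself flag it as the main obstacle. That case analysis is not routine bookkeeping: it is where the hypotheses $\rho\ge\alpha(1+2\alpha)$ and the conditions $I_1,I_2,I_3$ actually enter, and each branch requires a specific chain of inequalities (e.g.\ in the case $\alpha\le1/2$ one must first show $\alpha^2-x^2+\rho my\le0$, discard the $4\alpha^2x^2$ term from the denominator, and then use $1/my\ge-2/(1-\alpha)$; in the cases $\alpha>1/2$ with $\alpha^2-x^2+\rho my\le0$ one instead bounds $my(\alpha^2+x^2+\rho my)\le\rho m^2y^2$ and keeps only $4\alpha^2x^2$ in the denominator before invoking $I_3$ or its negation). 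Note also that your stated goal of showing the \emph{maximum equals} $\beta_0$ is both stronger than needed and probably false --- the paper only establishes $\beta_0$ as an upper bound in each regime, and its estimates are not claimed to be sharp --- while the branch conditions $I_2,I_3$ are delimited by the values of $x$ and $my$ at the boundary point itself, not at a maximizer, so the two-stage optimization you describe does not directly produce the case structure of~\eqref{h4}. Without carrying out these four estimates the proof is incomplete.
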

\begin{proof}
If we let $q(z)=(1+(1-2\alpha)z)/(1-z),$ then it suffices to show that $p\prec q.$ Let us suppose that $p$ is not subordinate to $q.$ Then by~\cite[Lemma~2.2d,p~24]{ds} and~\cite[Lemma~2.2f,p~26]{ds}, there exist $z_0\in\mathbb{D},$ $\zeta_0\in\partial\mathbb{D}\backslash\{1\}$ and $m\geq 1$ such that
\begin{equation}\label{h1}
	p(z_0)=q(\zeta_0)=\alpha+ix\quad\text{and}\quad z_0p'(z_0)=m\zeta_0 q'(\zeta_0)=:my\leq -\dfrac{(1-\alpha)^2+x^2}{2(1-\alpha)}.
\end{equation}
Consequently, we have
\begin{equation}\label{h6}
\gamma(p(z_0))^{\delta}+(1-\gamma)\dfrac{p(z_0)+\dfrac{z_0p'(z_0)}{p(z_0)}}{1+\rho\dfrac{z_0p'(z_0)}{p^2(z_0)}}=\gamma(q(\zeta_0))^{\delta}+(1-\gamma)\dfrac{q(\zeta_0)+\dfrac{m\zeta_0q'(\zeta_0)}{q(\zeta_0)}}{1+\rho\dfrac{m\zeta_0q'(\zeta_0)}{q^2(\zeta_0)}}.
\end{equation}
Now let $E(\delta)=(q(\zeta_0))^{\delta}$ and $L=\{E(\delta):\delta\in[1,2]\}.$ Geometrically, L represents an arc of the logarithmic spiral joining the points $E(1)=q(\zeta_0)$ and $E(2)=(q(\zeta_0))^2.$ We know that L cuts each radial halfline at an angle, which is constant. Clearly, $\arg(q(\zeta_0))^{\delta}$ is an increasing function of $\delta$ and thus, L is in the closed halfplane containing the origin and bounded by the line $\RE z=\RE E(1).$ As a result, we have
\begin{equation}\label{h7}
	\RE (q(\zeta_0))^{\delta}\leq \alpha,\quad\delta\in[1,2].
\end{equation}
We observe that
\begin{equation}\label{h5}
\dfrac{q(\zeta_0)+\dfrac{m\zeta_0q'(\zeta_0)}{q(\zeta_0)}}{1+\rho\dfrac{m\zeta_0q'(\zeta_0)}{q^2(\zeta_0)}}=q(\zeta_0)+(1-\rho)\dfrac{m\zeta_0q'(\zeta_0)}{q(\zeta_0)+\rho m\zeta_0 q'(\zeta_0)/q(\zeta_0)}.
\end{equation}
Using~\eqref{h1} and~\eqref{h5}, we have
\begin{eqnarray}\nonumber\label{hh2}
	\RE\left(\dfrac{q(\zeta_0)+\dfrac{m\zeta_0q'(\zeta_0)}{q(\zeta_0)}}{1+\rho\dfrac{m\zeta_0q'(\zeta_0)}{q^2(\zeta_0)}}\right)&=&\RE \left(\alpha+ix+(1-\rho)\dfrac{my}{\alpha+ix+\tfrac{\rho my}{\alpha+ix}}\right)\\ \nonumber
	& =& \alpha+(1-\rho)\dfrac{my\alpha(\alpha^2+x^2+\rho m y)}{(\alpha^2-x^2+\rho m y)^2+4\alpha^2x^2}.\\
	\end{eqnarray}
Since $my<0,$ we may observe that
\begin{equation}\label{h2}
\RE\left(\dfrac{q(\zeta_0)+\dfrac{m\zeta_0q'(\zeta_0)}{q(\zeta_0)}}{1+\rho\dfrac{m\zeta_0q'(\zeta_0)}{q^2(\zeta_0)}}\right)\leq  \alpha+(1-\rho)\dfrac{my\alpha(\alpha^2-x^2+\rho m y)}{(\alpha^2-x^2+\rho m y)^2+4\alpha^2x^2}.
\end{equation}
Case~(i): $0\leq \alpha\leq 1/2.$\\
From hypothesis, we have $\rho\geq\alpha(1+2\alpha)\geq 2\alpha^2/(1-\alpha)$ and from~\eqref{h1}, we have $my\leq -\tfrac{(1-\alpha)^2+x^2}{2(1-\alpha)}.$ Therefore
\begin{equation*}
	\alpha^2-x^2+\rho my\leq \dfrac{2\alpha^2-\rho(1-\alpha)}{2}-x^2\dfrac{2(1-\alpha)+\rho}{2(1-\alpha)}\leq 0.
\end{equation*}
Now using the fact that $\alpha^2x^2>0$ and that $my\leq -\tfrac{(1-\alpha)^2+x^2}{2(1-\alpha)}\leq -\tfrac{1-\alpha}{2},$ we obtain from~\eqref{h2},
\begin{eqnarray*}
	\RE\left(\dfrac{q(\zeta_0)+\dfrac{m\zeta_0q'(\zeta_0)}{q(\zeta_0)}}{1+\rho\dfrac{m\zeta_0q'(\zeta_0)}{q^2(\zeta_0)}}\right)&\leq&  \alpha+(1-\rho)\dfrac{my\alpha(\alpha^2-x^2+\rho m y)}{(\alpha^2-x^2+\rho m y)^2}\\
	&=&\alpha+(1-\rho)\dfrac{\alpha}{\tfrac{\alpha^2-x^2}{my}+\rho}\\
	&\leq& \alpha +(1-\rho)\dfrac{\alpha}{\rho-\tfrac{2\alpha^2}{1-\alpha}}\\
	&=&\alpha\dfrac{(1+\alpha)(1-2\alpha)}{\rho (1-\alpha)-2\alpha^2}.
\end{eqnarray*}
Case~(ii): $1/2<\alpha<1$ and $\alpha^2-x^2+\rho my>0.$\\
Since $\alpha>1/2,$ we have $2\alpha^2/(1-\alpha)>1$ and therefore, $\rho<2\alpha^2/(1-\alpha).$ Also, since $my<0$ and $\alpha^2-x^2+\rho my>0,$ we have
\begin{eqnarray*}
		\RE\left(\dfrac{q(\zeta_0)+\dfrac{m\zeta_0q'(\zeta_0)}{q(\zeta_0)}}{1+\rho\dfrac{m\zeta_0q'(\zeta_0)}{q^2(\zeta_0)}}\right)&\leq&\alpha+(1-\rho)\dfrac{my\alpha(\alpha^2-x^2+\rho m y)}{(\alpha^2-x^2+\rho m y)^2+4\alpha^2x^2}\leq \alpha.
\end{eqnarray*}
Case~(iii): $1/2<\alpha<1,$ $\alpha^2-x^2+\rho my\leq0$ and $ x^2\geq \tfrac{(2\alpha^2-\rho(1-\alpha))(1-\alpha)}{2(1-\alpha)+\rho}.$
From~\eqref{hh2}, we have
\begin{eqnarray}\nonumber\label{h3}
	\RE\left(\dfrac{q(\zeta_0)+\dfrac{m\zeta_0q'(\zeta_0)}{q(\zeta_0)}}{1+\rho\dfrac{m\zeta_0q'(\zeta_0)}{q^2(\zeta_0)}}\right)&\leq&\alpha+(1-\rho)my\dfrac{\alpha(\alpha^2+x^2+\rho my)}{(\alpha^2-x^2+\rho my)^2+4\alpha^2 x^2}\\ \nonumber
	&\leq& \alpha+(1-\rho)\dfrac{\alpha\rho m^2y^2}{(\alpha^2-x^2+\rho my)^2+4\alpha^2 x^2}\\ \nonumber
	&\leq&\alpha+(1-\rho)\dfrac{\alpha\rho m^2y^2}{4\alpha^2 x^2}\\
	&=&\alpha+(1-\rho)\dfrac{\rho m^2y^2}{4\alpha x^2}.
\end{eqnarray}
Using the inequality $my\leq-\tfrac{(1-\alpha)^2+x^2}{2(1-\alpha)}\leq -\tfrac{1-\alpha}{2}$ and the condition on $x^2$ in~\eqref{h3}, we obtain
\begin{eqnarray*}
	\RE\left(\dfrac{q(\zeta_0)+\dfrac{m\zeta_0q'(\zeta_0)}{q(\zeta_0)}}{1+\rho\dfrac{m\zeta_0q'(\zeta_0)}{q^2(\zeta_0)}}\right)&\leq& \alpha+(1-\rho)\dfrac{\rho}{4\alpha}\left(\dfrac{1-\alpha}{2}\right)^2\left(\dfrac{2(1-\alpha)+\rho}{(2\alpha^2-\rho(1-\alpha))(1-\alpha)}\right)\\
&=&\alpha+\dfrac{\rho(1-\rho)(1-\alpha)(2(1-\alpha)+\rho)}{16\alpha(2\alpha^2-\rho(1-\alpha))}.
\end{eqnarray*}
Case~(iv): $1/2<\alpha<1,$ $\alpha^2-x^2+\rho my\leq0$ and $ x^2< \tfrac{(2\alpha^2-\rho(1-\alpha))(1-\alpha)}{2(1-\alpha)+\rho}.$\\
Considering $my\leq-\tfrac{(1-\alpha)^2+x^2}{2(1-\alpha)}\leq -\tfrac{x^2}{2(1-\alpha)}$ and using it in~\eqref{h3} along with the condition on $x^2$, we get
\begin{eqnarray*}
		\RE\left(\dfrac{q(\zeta_0)+\dfrac{m\zeta_0q'(\zeta_0)}{q(\zeta_0)}}{1+\rho\dfrac{m\zeta_0q'(\zeta_0)}{q^2(\zeta_0)}}\right)&\leq& \alpha+(1-\rho)\dfrac{\rho}{4\alpha x^2}\left(\dfrac{-x^2}{2(1-\alpha)}\right)^2\\
	&=&\alpha+\dfrac{\rho(1-\rho)x^2}{16\alpha(1-\alpha)^2}\\
	&\leq& \alpha+\dfrac{\rho(1-\rho)}{16\alpha(1-\alpha)}\left(\dfrac{2\alpha^2-\rho(1-\alpha)}{2(1-\alpha)+\rho}\right).
\end{eqnarray*}
Combining all the cases we obtain
\begin{equation}\label{ha7}
	\RE\left(\dfrac{q(\zeta_0)+\dfrac{m\zeta_0q'(\zeta_0)}{q(\zeta_0)}}{1+\rho\dfrac{m\zeta_0q'(\zeta_0)}{q^2(\zeta_0)}}\right)\leq\beta_0,	
\end{equation}
where $\beta_0$ is given by~\eqref{h4}. From~\eqref{h6},~\eqref{h7} and~\eqref{ha7}, we have
\begin{equation*}
	\RE\left(\gamma(p(z_0))^{\delta}+(1-\gamma)\dfrac{p(z_0)+\dfrac{z_0p'(z_0)}{p(z_0)}}{1+\rho\dfrac{z_0p'(z_0)}{p^2(z_0)}}\right)\leq \gamma\alpha+(1-\gamma)\beta_0,
\end{equation*}
which contradicts the hypothesis. Hence the result follows.
\end{proof}
\begin{remark}
	If we take $\gamma=0$ and $\alpha\in[0,1/2],$ the above result reduces to a result of Kanas~\cite[Theorem~2.4]{kanas2017}. Infact, we extended this result for $\alpha\in[0,1).$
\end{remark}
\begin{remark}
	By taking $\gamma=0,\;\rho=1/2$ and $\alpha=0,$ we obtain a result of Kanas and Tudor~\cite[Theorem~2.3]{kanasbmms}
\end{remark}
\begin{theorem}\label{genels}
	Let $\gamma\in[0,1],$ $\alpha\in[0,1),$ $\mu\in[0,1],$ $\delta\in[1,2],$ $\beta\in[0,1)$ and $\rho\in[0,1]$ be such that $\rho\geq \alpha(1+2\alpha),$ whenever $\alpha\in[0,1/2].$ Also, let $p$ be an analytic function with $p(0)=1$ and
\begin{equation*}
	\RE\left(\gamma(p(z))^{\delta}+(1-\gamma)\dfrac{(p(z))^{\mu}\left(p(z)+\dfrac{zp'(z)}{p(z)}\right)^{1-\mu}}{1+\rho\dfrac{zp'(z)}{p^2(z)}}\right)>\beta,
\end{equation*}
for $\beta\geq \gamma\alpha+(1-\gamma)\beta_1,$ where $\beta_1$ is defined as follows:\\
{\small \begin{equation*}
		\beta_1=\begin{cases}\alpha&I_4\;\text{holds}\\
			\alpha+\dfrac{2\alpha\rho^2(1-2\alpha)}{(\rho-2(1-\alpha))^2(\rho(1-\alpha)-2\alpha^2)},\quad &I_1\wedge (\sim I_4)\;\text{holds}\\
		\alpha+\dfrac{\alpha\rho(1-\alpha)(4\alpha^2-\rho(1-\alpha))}{4(2\alpha^2-\rho(1-\alpha))^2}\quad &(\sim I_1)\wedge I_2\wedge(\sim I_4)\;\text{holds}\\
		\alpha+\dfrac{\alpha\rho(1-\alpha)(2(1-\alpha)+\rho)}{2(4\alpha^2(1-\alpha)+\rho(2\alpha-1))} & (\sim I_1)\wedge(\sim I_2)\wedge I_3\wedge({\sim I_4})\; \text{holds}\\
		\alpha+\dfrac{\alpha\rho(2\alpha^2-\rho(1-\alpha))}{2(4\alpha^2(1-\alpha)+\rho(2\alpha-1))}&(\sim I_1)\wedge(\sim I_2)\wedge(\sim I_3)\wedge(\sim I_4)\;\text{holds},
\end{cases}
\end{equation*}}
with $I_1,\;I_2$ and $I_3$ given by~\eqref{h14} and $I_4:\alpha^2+x^2+\rho my\leq 0,$ provided $x>0$ and $my\leq-\tfrac{(1-\alpha)^2+x^2}{2(1-\alpha)}.$ Then $\RE p(z)>\alpha.$
\end{theorem}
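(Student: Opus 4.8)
The plan is to run the argument of Theorem~\ref{htheo1}, now with the geometric mean sitting in the numerator. Put $q(z)=(1+(1-2\alpha)z)/(1-z)$, which maps $\mathbb{D}$ univalently onto the half-plane $\{w:\RE w>\alpha\}$; since $p(0)=1=q(0)$, the conclusion $\RE p(z)>\alpha$ is equivalent to $p\prec q$. Suppose $p\nprec q$. By the boundary forms of the Miller--Mocanu lemma, \cite[Lemma~2.2d]{ds} and \cite[Lemma~2.2f]{ds}, there are $z_0\in\mathbb{D}$, $\zeta_0\in\partial\mathbb{D}\setminus\{1\}$ and $m\geq1$ with $p(z_0)=q(\zeta_0)=\alpha+ix$ and $z_0p'(z_0)=m\zeta_0q'(\zeta_0)=:my$. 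A direct computation of $q'$ gives $\zeta_0q'(\zeta_0)=-\bigl((1-\alpha)^{2}+x^{2}\bigr)/\bigl(2(1-\alpha)\bigr)$, so $my\leq-\bigl((1-\alpha)^{2}+x^{2}\bigr)/\bigl(2(1-\alpha)\bigr)$, exactly the restriction on $(x,y)$ imposed in the statement. Because $q$ and the function on the left of the hypothesis have real Taylor coefficients, replacing $p(z)$ by $\overline{p(\bar z)}$ if necessary we may assume $x>0$; the degenerate value $\zeta_0=-1$, i.e.\ $x=0$, is recovered from the same estimates by continuity.

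Evaluating the left-hand side at $z_0$, it suffices to show that its real part there is at most $\gamma\alpha+(1-\gamma)\beta_1$, for this quantity is $\le\beta$ by hypothesis and the strict inequality assumed on $\mathbb{D}$ is then violated at $z_0$. The term $\gamma(p(z_0))^{\delta}=\gamma(q(\zeta_0))^{\delta}$ is treated exactly as in Theorem~\ref{htheo1}: the arc $\{(q(\zeta_0))^{\delta}:\delta\in[1,2]\}$ is a piece of a logarithmic spiral lying in the half-plane $\RE w\le\RE q(\zeta_0)=\alpha$, whence $\RE(q(\zeta_0))^{\delta}\le\alpha$ for every $\delta\in[1,2]$.

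The core of the argument is the bound for the real part of
\[
	X:=\frac{(q(\zeta_0))^{\mu}\bigl(q(\zeta_0)+m\zeta_0q'(\zeta_0)/q(\zeta_0)\bigr)^{1-\mu}}{1+\rho\,m\zeta_0q'(\zeta_0)/q^{2}(\zeta_0)}.
\]
Writing $w=q(\zeta_0)=\alpha+ix$ and $s=m\zeta_0q'(\zeta_0)=my<0$, one has $q(\zeta_0)+m\zeta_0q'(\zeta_0)/q(\zeta_0)=(w^{2}+s)/w$ and $1+\rho m\zeta_0q'(\zeta_0)/q^{2}(\zeta_0)=(w^{2}+\rho s)/w^{2}$, so that $X=w^{2\mu+1}(w^{2}+s)^{1-\mu}/(w^{2}+\rho s)$; equivalently, $X$ is the geometric mean, with weights $\mu$ and $1-\mu$, of $q(\zeta_0)/(1+\rho m\zeta_0q'/q^{2})$ and of $\bigl(q(\zeta_0)+m\zeta_0q'/q\bigr)/(1+\rho m\zeta_0q'/q^{2})$, the second factor being precisely the quantity estimated in Theorem~\ref{htheo1}. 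Expressing the moduli and arguments of both factors through $x$ and $my$, one bounds $|X|$ and $\arg X$ and then maximises over the admissible set $m\geq1$, $x>0$, $my\leq-\bigl((1-\alpha)^{2}+x^{2}\bigr)/\bigl(2(1-\alpha)\bigr)$. Which estimate is sharpest is governed by whether $\alpha\le\tfrac12$ (condition $I_1$), by the signs of $\alpha^{2}-x^{2}+\rho my$ and $\alpha^{2}+x^{2}+\rho my$ (conditions $I_2$ and $I_4$), and by the threshold in $I_3$; tracking these alternatives produces the five branches of $\beta_1$, the case $I_4:\alpha^{2}+x^{2}+\rho my\le0$ giving the value $\alpha$ and the remaining cases the stated explicit constants. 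Combining $\RE X\le\beta_1$ with $\RE(q(\zeta_0))^{\delta}\le\alpha$ shows the real part of the left-hand side at $z_0$ is at most $\gamma\alpha+(1-\gamma)\beta_1\le\beta$, a contradiction; hence $p\prec q$, i.e.\ $\RE p(z)>\alpha$.

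The hard part is this last estimate. In Theorem~\ref{htheo1} the M\"obius-type expression carries integer exponents, so its real part can be computed outright; here the exponents $\mu$ and $1-\mu$ force one to control a genuine geometric mean of two complex numbers via its modulus and argument, while retaining enough explicit dependence on $x$ and $my$ to carry out the optimisation that yields the sharp constants $\beta_1$ --- the book-keeping of the cases $I_1,\dots,I_4$ being where the effort concentrates. For $\mu=0$ the whole argument collapses to that of Theorem~\ref{htheo1}, and for $\rho=0$ it recovers the Cri\c{s}an--Kanas implication~\eqref{h16} for $h(z)=(1+(1-2\alpha)z)/(1-z)$; these degenerations may be recorded as remarks.
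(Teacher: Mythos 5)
Your skeleton matches the paper's strategy — contradiction via the Miller--Mocanu boundary lemmas, the logarithmic-spiral argument giving $\RE(q(\zeta_0))^{\delta}\le\alpha$, and the recognition that the geometric-mean term $E(\mu)$ interpolates between $E(0)$ (the quantity of Theorem~\ref{htheo1}) and $E(1)=q(\zeta_0)/(1+\rho m\zeta_0q'(\zeta_0)/q^2(\zeta_0))$ — but the proof has a genuine gap where the theorem's actual content lives. You never compute $\RE E(1)$, which the paper evaluates explicitly as
\begin{equation*}
\RE E(1)=\alpha-\dfrac{\alpha\rho my\,(\alpha^2+x^2+\rho my)}{(\alpha^2-x^2+\rho my)^2+4\alpha^2x^2},
\end{equation*}
and you never carry out the four-case optimization over $x$ and $my$ (using $my\le-((1-\alpha)^2+x^2)/(2(1-\alpha))$ and the alternatives $I_1,\dots,I_4$) that produces the five explicit constants in $\beta_1$. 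Writing ``tracking these alternatives produces the five branches of $\beta_1$'' asserts the conclusion rather than deriving it; each branch requires a separate chain of inequalities (the paper's~\eqref{h9}--\eqref{h13}), and these are not routine.

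The second, more structural, problem is your mechanism for passing from the endpoints to general $\mu\in[0,1]$. You propose to ``bound $|X|$ and $\arg X$ and then maximise,'' but separate bounds on modulus and argument of a product $A^{\mu}B^{1-\mu}$ do not yield $\RE X\le\beta_1$ with $\beta_1$ the bound attached to the endpoint $\mu=1$; in general $r\cos\theta$ under such bounds exceeds the endpoint value. The paper's argument is different and essential: it shows $\arg E(\mu)$ is monotone decreasing in $\mu$, so that the logarithmic-spiral arc $L=\{E(\mu):\mu\in[0,1]\}$ lies in the closed half-plane containing the origin bounded by the line $\RE z=\RE E(1)$, whence $\RE E(\mu)\le\RE E(1)\le\beta_1$ for every $\mu$. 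Without this monotonicity-plus-spiral step (or a substitute for it), the case $\mu\in(0,1)$ is not covered, and the theorem is only proved at the endpoints $\mu=0$ and $\mu=1$.
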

\begin{proof}
We proceed as done in Theorem~\ref{htheo1} to show that $p\prec q,$ where $q(z)=(1+(1-2\alpha)z)/(1-z).$ For if $p\nprec q,$ then there exists $z_0\in\mathbb{D},$ $\zeta_0\in\partial\mathbb{D}$ and $m\geq1$ such that
\begin{align*}
	\gamma(p(z_0))^{\delta}&+(1-\gamma)\dfrac{(p(z_0))^{\mu}\left(p(z_0)+\dfrac{z_0p'(z_0)}{p(z_0)}\right)^{1-\mu}}{1+\rho\dfrac{z_0p'(z_0)}{p^2(z_0)}}\\
	&=\gamma(q(\zeta_0))^{\delta}+(1-\gamma)\dfrac{(q(\zeta_0))^{\mu}\left(q(\zeta_0)+\dfrac{m\zeta_0q'(\zeta_0)}{q(\zeta_0)}\right)^{1-\mu}}{1+\rho\dfrac{m\zeta_0q'(\zeta_0)}{q^2(\zeta_0)}}.
\end{align*}
From the proof of Theorem~\ref{htheo1}, we have
\begin{equation}\label{h99}
\RE (q(\zeta_0))^{\delta}\leq \alpha.
\end{equation}
Now we set
\begin{equation*}
	E(\mu)=\dfrac{(q(\zeta_0))^{\mu}\left(q(\zeta_0)+\dfrac{m\zeta_0q'(\zeta_0)}{q(\zeta_0)}\right)^{1-\mu}}{1+\rho\dfrac{m\zeta_0q'(\zeta_0)}{q^2(\zeta_0)}}=\dfrac{q(\zeta_0)\left(1+\dfrac{m\zeta_0q'(\zeta_0)}{q^2(\zeta_0)}\right)^{1-\mu}}{1+\rho\dfrac{m\zeta_0q'(\zeta_0)}{q^2(\zeta_0)}}
\end{equation*}
and let $L=\{E(\mu):\mu\in[0,1]\}.$ Note that
\begin{equation*}
	E(0)=\dfrac{q(\zeta_0)+\dfrac{m\zeta_0q'(\zeta_0)}{q(\zeta_0)}}{1+\rho\dfrac{m\zeta_0q'(\zeta_0)}{q^2(\zeta_0)}}\quad\text{and}\quad E(1)=\dfrac{q(\zeta_0)}{1+\rho\dfrac{m\zeta_0q'(\zeta_0)}{q^2(\zeta_0)}}.
\end{equation*}
It was shown in the proof of Theorem~\ref{htheo1} that $\RE E(0)\leq \beta_0,$ where $\beta_0$ is given by~\eqref{h4}. Now we consider $\RE E(1),$ which by using the conditions~\eqref{h1} becomes
\begin{eqnarray*}
\RE E(1)&=&\RE\left(\dfrac{\alpha+ix}{1+\dfrac{\rho my}{(\alpha+ix)^2}}\right)\\
&=&\dfrac{\alpha(\alpha^4+x^4+2\alpha^2x^2+\alpha^2\rho my-3x^2\rho my)}{(\alpha^2-x^2+\rho my)^2+4\alpha^2x^2}\\
&=&\alpha-\dfrac{\alpha\rho my(\alpha^2+x^2+\rho my)}{(\alpha^2-x^2+\rho my)^2+4\alpha^2x^2}.\\
\end{eqnarray*}
It is easy to note that $\RE E(1)\leq\alpha,$ whenever $\alpha^2+x^2+\rho my\leq 0.$ So now let us assume that $\alpha^2+x^2+\rho my>0$ for the rest of the cases. Before we start our first case, we observe that
\begin{eqnarray}\label{h9}\nonumber
-\alpha\rho my\left(\dfrac{\alpha^2+x^2+\rho my}{(\alpha^2-x^2+\rho my)^2+4\alpha^2x^2}\right)&\leq&-\alpha\rho my\left(\dfrac{\alpha^2+x^2}{(\alpha^2-x^2+\rho my)^2+4\alpha^2x^2}\right)\\ \nonumber
&\leq&\dfrac{-\alpha\rho my(\alpha^2+x^2)}{(\alpha^2+x^2+\rho my)^2}\\
&=&\dfrac{-\alpha\rho (\alpha^2+x^2)}{my\left(\tfrac{\alpha^2+x^2}{my}+\rho\right)^2},
\end{eqnarray}
 since $my<0$ and $\alpha^2+x^2+\rho my>0.$\\
Case~(i): $0\leq\alpha\leq 1/2.$\\
From~\eqref{h1}, we have $my\leq -\tfrac{(1-\alpha)^2+x^2}{2(1-\alpha)},$ which further implies
\begin{equation}\label{h8}
\alpha^2+x^2+\rho my\leq \alpha^2-\dfrac{\rho(1-\alpha)}{2}+x^2\left(1-\dfrac{\rho}{2(1-\alpha)}\right).
\end{equation}
The inequality $\alpha\leq1/2$ implies $1-\rho/2(1-\alpha)>0.$ Also from hypothesis, we have $\rho\geq\alpha(1+2\alpha)\geq 2\alpha^2/(1-\alpha),$ which is sufficient to conclude that
\begin{equation*}
	\alpha^2+x^2+\rho my\leq x^2\left(1-\dfrac{\rho}{2(1-\alpha)}\right).
\end{equation*}
Thus
\begin{equation}\label{h10}
	\dfrac{\alpha^2+x^2}{my}+\rho\geq \rho-2(1-\alpha).
\end{equation}
We know that $\alpha^2+x^2+\rho my>0,$ so from~\eqref{h8} we obtain
\begin{equation}\label{h11}
	x^2\geq \dfrac{(\rho(1-\alpha)-2\alpha^2)(1-\alpha)}{2(1-\alpha)-\rho}.
\end{equation}
Using~\eqref{h10} and~\eqref{h11} in~\eqref{h9}, we get
\begin{eqnarray*}
-\alpha\rho my\left(\dfrac{\alpha^2+x^2+\rho my}{(\alpha^2-x^2+\rho my)^2+4\alpha^2x^2}\right)&\leq& \dfrac{2\alpha\rho(1-\alpha)(\alpha^2+x^2)}{x^2(\rho-2(1-\alpha))^2}	\\
&=&\dfrac{2\alpha\rho(1-\alpha)\left(\tfrac{\alpha^2}{x^2}+1\right)}{(\rho-2(1-\alpha))^2}\\
&\leq&\dfrac{2\alpha(1-2\alpha)\rho^2}{(\rho-2(1-\alpha))^2(\rho(1-\alpha)-2\alpha^2)}.
\end{eqnarray*}
Case~(ii): $1/2<\alpha<1$ and $\alpha^2-x^2+\rho my>0$\\
Using the above condition, we obtain
\begin{eqnarray*}
\alpha^2+x^2+\rho my&<&2(\alpha^2+\rho my)\leq 2\left(\alpha^2-\dfrac{\rho(1-\alpha)}{2}\right).
\end{eqnarray*}
Thus we have
\begin{equation}\label{h12}
	\dfrac{\alpha^2+x^2}{my}+\rho\geq \dfrac{-2\left(2\alpha^2-\rho(1-\alpha)\right)}{1-\alpha}.
\end{equation}
Next we observe that
\begin{eqnarray}\label{ha13}
\alpha^2+x^2&\leq& 2\alpha^2+\rho my\leq 2\alpha^2-\dfrac{\rho(1-\alpha)}{2}.
\end{eqnarray}
Using~\eqref{h12} and~\eqref{ha13} in~\eqref{h9}, we get
\begin{eqnarray*}
-\alpha\rho my\left(\dfrac{\alpha^2+x^2+\rho my}{(\alpha^2-x^2+\rho my)^2+4\alpha^2x^2}\right)&\leq& \dfrac{2\alpha\rho\left(2\alpha^2-\dfrac{\rho(1-\alpha)}{2}\right)}{4(1-\alpha)\left(\dfrac{2\alpha^2-\rho(1-\alpha)}{1-\alpha}\right)^2}\\
&=&\dfrac{\alpha\rho(1-\alpha)(4\alpha^2-\rho(1-\alpha))}{4(2\alpha^2-\rho(1-\alpha))^2}.
\end{eqnarray*}
Case~(iii): $1/2<\alpha<1,$ $\alpha^2-x^2+\rho my\leq0$ and $ x^2\geq \tfrac{(2\alpha^2-\rho(1-\alpha))(1-\alpha)}{2(1-\alpha)+\rho}.$\\
Since $my<0,$ $\alpha^2+x^2+\rho my\leq \alpha^2+x^2,$ which further implies
\begin{equation}\label{h13}
	\dfrac{\alpha^2+x^2}{my}+\rho\geq \dfrac{-2(\alpha^2+x^2)}{1-\alpha}.
\end{equation}
Now by using~\eqref{h13} and the condition $my\leq -(1-\alpha)/2$ in~\eqref{h9}, we obtain
\begin{equation*}
-\alpha\rho my\left(\dfrac{\alpha^2+x^2+\rho my}{(\alpha^2-x^2+\rho my)^2+4\alpha^2x^2}\right)\leq \dfrac{\alpha\rho(1-\alpha)}{2(\alpha^2+x^2)},
\end{equation*}
which after applying the condition on $x^2$ becomes
\begin{equation*}
-\alpha\rho my\left(\dfrac{\alpha^2+x^2+\rho my}{(\alpha^2-x^2+\rho my)^2+4\alpha^2x^2}\right)\leq\dfrac{\alpha\rho(1-\alpha)(2(1-\alpha)+\rho)}{2(4\alpha^2(1-\alpha)+\rho(2\alpha-1))}.
\end{equation*}
Case~(iv): $1/2<\alpha<1,$ $\alpha^2-x^2+\rho my\leq0$ and $ x^2< \tfrac{(2\alpha^2-\rho(1-\alpha))(1-\alpha)}{2(1-\alpha)+\rho}.$\\
Proceeding similarly as done in case~(iii) and replacing the condition $my\leq -(1-\alpha)/2$ by $my\leq -x^2/2(1-\alpha),$ we get
\begin{equation*}
-\alpha\rho my\left(\dfrac{\alpha^2+x^2+\rho my}{(\alpha^2-x^2+\rho my)^2+4\alpha^2x^2}\right)\leq \dfrac{\alpha\rho}{2\left(\dfrac{\alpha^2}{x^2}+1\right)(1-\alpha)},	
\end{equation*}
which by using the condition on $x^2$ becomes
\begin{eqnarray*}
	-\alpha\rho my\left(\dfrac{\alpha^2+x^2+\rho my}{(\alpha^2-x^2+\rho my)^2+4\alpha^2x^2}\right)&\leq& \dfrac{\alpha\rho}{2\left(\dfrac{\alpha^2}{x^2}+1\right)(1-\alpha)}\\
	&\leq & \dfrac{\alpha\rho(2\alpha^2-\rho(1-\alpha))}{2(4\alpha^2(1-\alpha)+\rho(2\alpha-1))}.	
\end{eqnarray*}
Combining all the cases, we obtain that $\RE E(1)\leq \beta_1.$ We observe that $L$ represents an arc of logarithmic spiral with end points $E(0)$ and $E(1),$ such that it cuts every radial halfline at a constant angle. Also
\begin{equation*}
\arg E(\mu)=\arg(q(\zeta_0))+(1-\mu)\left(1+\dfrac{m\zeta_0 q'(\zeta_0)}{q^2(\zeta_0)}\right)-\arg\left(1+\rho\dfrac{m\zeta_0 q'(\zeta_0)}{q^2(\zeta_0)}\right)
\end{equation*}
is a decreasing function of $\mu$ and thus, $\arg E(1)\leq \arg E(\mu)\leq \arg E(0),\; \mu\in[0,1].$ So we may conclude that $L$ lies in the closed halfplane containing the origin and determined by the line $\RE z=\RE E(1),$ which means
\begin{equation}\label{h98}
  \RE E(\mu)\leq \beta_1.
\end{equation}
From~\eqref{h99} and~\eqref{h98}, we obtain
\begin{equation*}
	\RE\left(\gamma(p(z))^{\delta}+(1-\gamma)\dfrac{(p(z))^{\mu}\left(p(z)+\dfrac{zp'(z)}{p(z)}\right)^{1-\mu}}{1+\rho\dfrac{zp'(z)}{p^2(z)}}\right)\leq\gamma\alpha+(1-\gamma)\beta_1\leq\beta,
\end{equation*}
which contradicts the hypothesis and hence the result follows.
\end{proof}
\begin{remark}
	Taking $\rho=0,$ we obtain the result \cite[Theorem~2.3]{kanasels}
\end{remark}
\begin{remark}
	By taking $\mu=0,\;\gamma=0,\;\rho=1/2$ and $\alpha=0,$ we obtain the result~\cite[Theorem~2.3]{kanasbmms}.
\end{remark}
\begin{remark}
	For $\gamma=0,\;\rho=0$ and $\alpha=0,$ we obtain a result of Lewandowski et al.~\cite{lewa}.
\end{remark}
\begin{remark}
	For $\delta=1,\;\rho=0,\;\mu=0$ and $\alpha=0,$ we obtain a result of Sakaguchi~\cite{sakaguchi}.
\end{remark}
\section{Applications}
We begin with the applications of Lemma~\ref{mainn}:
\begin{theorem}\label{kcorr}
	Let $\delta\in[0,1],$ $h\in\mathcal{Q}$ with $0\in\overline{h(\mathbb{D})}$ and $\Theta,\Phi\in\mathcal{H}$ are such that $\Theta(0)=1$ and
	\begin{equation}\label{hypo2}
		\RE \Phi(z)\geq 5|\Theta(z)-1|-\RE(\Theta(z)-1),\quad z\in\mathbb{D}.
	\end{equation}
	If $p\in\mathcal{H}(\delta;\Theta,\Phi),$ $p(0)=h(0)$ and $H_{\delta;\Theta,\Phi,p}\prec h,$ then $p\prec h.$
\end{theorem}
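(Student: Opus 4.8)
The plan is to deduce Theorem~\ref{kcorr} directly from Lemma~\ref{mainn} by verifying that the hypothesis \eqref{hypo2} implies the hypothesis \eqref{hypo}. Since Lemma~\ref{mainn} already does all the heavy lifting (the Jack-type maximum-principle argument, the convexity/supporting-halfplane reasoning, and the boundary-case analysis when $P_{1-\delta;\Theta,\Phi,p}(z_0)=0$), the only thing left to check is the pointwise inequality
\begin{equation*}
	\RE\left(\Phi(z)+\dfrac{h(\zeta)}{\zeta h'(\zeta)}(\Theta(z)-1)\right)>0,\qquad z\in\mathbb{D},\ \zeta\in\partial\mathbb{D},
\end{equation*}
together with the standing requirement $\RE\Phi(z)>0$ (which is immediate from \eqref{hypo2}, since the right-hand side $5|\Theta(z)-1|-\RE(\Theta(z)-1)\geq 5|\Theta(z)-1|-|\Theta(z)-1|=4|\Theta(z)-1|\geq 0$).

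First I would recall the classical bound on the logarithmic derivative of a convex function on the boundary: if $h$ is convex (in particular $h\in\mathcal{Q}$), then $\RE(\zeta h'(\zeta)/h(\zeta))\geq 1/2$ wherever this makes sense, and more to the point one has the estimate $|h(\zeta)/(\zeta h'(\zeta))|\le 2$ — this is the standard fact that for $h$ convex, $|h(\zeta)|\le 2|\zeta h'(\zeta)|$ on $\partial\mathbb D$ (equivalently, $\RE(1+\zeta h''(\zeta)/h'(\zeta))\ge 0$ forces the Koebe-type bound). Writing $w:=h(\zeta)/(\zeta h'(\zeta))$, we therefore have $|w|\le 2$. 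Then
\begin{equation*}
	\RE\left(\Phi(z)+w(\Theta(z)-1)\right)\ge \RE\Phi(z)-|w|\,|\Theta(z)-1|\ge \RE\Phi(z)-2|\Theta(z)-1|.
\end{equation*}
Hmm — this only gives a factor $2$, whereas the hypothesis \eqref{hypo2} carries a factor $5$; I would instead exploit more than just $|w|\le 2$. The sharper route is to use that for convex $h$ the quantity $w=h(\zeta)/(\zeta h'(\zeta))$ lies in a half-plane (since $\RE(\zeta h'(\zeta)/h(\zeta))\ge 1/2$ means $1/w$ lies in $\RE\zeta\ge 1/2$, so $w$ lies in the disc $|w-1|\le 1$, hence $\RE w\ge 0$ and $|w|\le 2$). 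Thus $w=1+\zeta h''/h'$-type reasoning gives both $\RE w\ge 0$ and $|\IM w|\le 1$. Writing $\Theta(z)-1=a+ib$, we get
\begin{equation*}
	\RE\bigl(\Phi(z)+w(\Theta(z)-1)\bigr)=\RE\Phi(z)+a\,\RE w-b\,\IM w\ge \RE\Phi(z)-|a|\,\RE w-|b|\ge \RE\Phi(z)-2|a|-|b|,
\end{equation*}
and then $\RE\Phi(z)\ge 5|\Theta(z)-1|-\RE(\Theta(z)-1)=5\sqrt{a^2+b^2}-a\ge 5|a|+5|b| \cdot 0 - a$; a short estimate of the form $5\sqrt{a^2+b^2}-a\ge 2|a|+|b|$ (valid because $5\sqrt{a^2+b^2}\ge 3|a|+|b|$ and $-a\ge -|a|$, giving $\ge 2|a|+|b|$) closes the inequality strictly unless $\Theta(z)\equiv 1$, in which case \eqref{hypo} reduces to $\RE\Phi(z)>0$, already known.

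The main obstacle is pinning down the exact constant in the convex-function boundary estimate so that the numerical slack in \eqref{hypo2} (the coefficient $5$) is genuinely consumed; I expect one needs the two-sided information $\RE w\ge 0$ and $|w-1|\le 1$ on $w=h(\zeta)/(\zeta h'(\zeta))$ rather than the crude $|w|\le 2$, and a careful case split on the sign of $\RE(\Theta(z)-1)$. Once \eqref{hypo} is verified, the conclusion $p\prec h$ follows verbatim from Lemma~\ref{mainn}, so no separate maximum-principle argument is needed here. One should also double-check the degenerate endpoint cases $\delta=0$ and $p$ constant, but these are handled inside Lemma~\ref{mainn} exactly as before.
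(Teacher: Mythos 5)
Your overall strategy is the right one and is exactly the paper's: everything reduces to checking that \eqref{hypo2} forces the hypothesis \eqref{hypo} of Lemma~\ref{mainn}, after which the conclusion is immediate. The gap is in the key estimate on $w:=h(\zeta)/(\zeta h'(\zeta))$. You invoke Marx--Strohh\"acker in the form $\RE(\zeta h'(\zeta)/h(\zeta))\geq 1/2$, i.e.\ $|w-1|\leq 1$ and hence $|w|\leq 2$, \emph{for $h$ itself}. That statement is about convex functions vanishing at the origin; here $h(0)=p(0)$, which is $1$ (not $0$) in every intended application, so it does not apply. Concretely, for $h(z)=(1+z)/(1-z)$ one computes $w=-i\sin\theta$ at $\zeta=e^{i\theta}$, so $|w-1|=\sqrt{1+\sin^2\theta}>1$; for $h(z)=((1+z)/(1-z))^{1/4}$ (convex, $0\in\overline{h(\mathbb{D})}$) one gets $w=-4i\sin\theta$, so even $|w|\leq 2$ fails. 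Both of your displayed estimates, $\RE(\Phi+w(\Theta-1))\geq\RE\Phi-2|\Theta-1|$ and the refinement using $\RE w\geq 0$, $|\IM w|\leq 1$, therefore rest on a false premise, and the final numerical inequality $5\sqrt{a^2+b^2}-a\geq 2|a|+|b|$ is verifying the wrong target.

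The correct route (the paper's) is to apply Marx--Strohh\"acker to the shifted function $h_1:=h-1$, which does vanish at $0$: this gives $|h_1(\zeta)/(\zeta h_1'(\zeta))-1|\leq 1$, and since $h=h_1+1$, $h'=h_1'$, one picks up the extra term $1/(\zeta h'(\zeta))$, controlled by the distortion bound $|h_1'(\zeta)|\geq 1/4$ for normalized convex functions. This yields $|w-1|\leq 1+4=5$, which is precisely where the coefficient $5$ in \eqref{hypo2} comes from; moreover the decomposition $w=1+(w-1)$ is essential, because
\begin{equation*}
	\RE\bigl(w(\Theta(z)-1)\bigr)=\RE(\Theta(z)-1)+\RE\bigl((w-1)(\Theta(z)-1)\bigr)\geq \RE(\Theta(z)-1)-5|\Theta(z)-1|,
\end{equation*}
and the term $-\RE(\Theta(z)-1)$ on the right of \eqref{hypo2} exists exactly to cancel the first summand. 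A crude modulus bound $|w|\leq 6$ (which is all one gets without the decomposition) is not beaten by $\RE\Phi\geq 4|\Theta-1|$, so your approach cannot be repaired by merely adjusting constants. (A minor point you share with the paper: \eqref{hypo2} only yields $\RE\Phi\geq 0$ and a non-strict version of \eqref{hypo} when $\Theta\equiv 1$, so strictness deserves a remark in either write-up; but that is not the decisive issue here.)
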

\begin{proof}
	In view of Lemma~\ref{mainn}, it is sufficient to show that $\Theta,\;\Phi$ and $h$ satisfies~\eqref{hypo}. Since $h$ is convex and $h(0)=1,$ we can say that $h_1:=h-1\in\mathcal{C}.$ Using Marx Strohh\"{a}cker theorem~\cite{ds}, we have $\RE(\zeta h_1'(\zeta)/h_1(\zeta))>1/2,$ which is equivalent to
	\begin{equation*}
		\left|\dfrac{h_1(\zeta)}{\zeta h_1'(\zeta)}-1\right|\leq 1,
	\end{equation*}
	which means
	\begin{equation}\label{10}
		\left|\dfrac{h(\zeta)}{\zeta h'(\zeta)}-1\right|\leq 1+\dfrac{1}{|h'(\zeta)|}.
	\end{equation}
	Since $h_1\in\mathcal{C},$ we have $|h_1'(z)|\geq 1/(1+r)^2$ on $|z|=r$~\cite[Theorem.9, pp~118]{goodman}. We know that $\zeta\in\partial\mathbb{D},$ so we have $|h'(\zeta)|=|h_1'(\zeta)|\geq 1/4.$ Thus~\eqref{10} reduces to
	\begin{equation}\label{11}
		\left|\dfrac{h(\zeta)}{\zeta h'(\zeta)}-1\right|\leq 5.
	\end{equation}
	Note that if $X,\;Y\in\mathbb{C}$ and $|X-1|\leq K,$ then
	\begin{equation*}
		\RE(X.Y)=\RE Y+\RE Y(X-1))\geq \RE Y-|Y|K.
	\end{equation*}
	Applying this inequality to~\eqref{hypo} and using~\eqref{11}, we get
	\begin{equation*}
		\RE\left( \Phi(z)+\dfrac{h(\zeta)}{\zeta h'(\zeta)}(\Theta(z)-1)\right)\geq \RE \Phi(z)+\RE(\Theta(z)-1)- 5|\Theta(z)-1|.
	\end{equation*}
	From~\eqref{9} and~\eqref{hypo2}, we conclude that $\RE \Psi(z)>0$ and~\eqref{hypo} holds. Therefore the result follows by Lemma~\ref{mainn}.
\end{proof}
\begin{corollary}
	Let $\delta\in[0,1],$ $h\in\mathcal{Q}$ with $0\in\overline{h(\mathbb{D})}$ and $\Theta\in\mathcal{H}$ be a bounded function such that $\Theta(0)=1$ and $|\Theta(z)|\leq M\;(z\in\mathbb{D})$ for some $M>0.$ Suppose $\Phi\in\mathcal{H}$ be such that
	\begin{equation}\label{hypo3}
		\RE \Phi(z)\geq 6(M+1).
	\end{equation}
	If $p\in\mathcal{H}(\delta;\Theta,\Phi),$ $p(0)=h(0)$ and $H_{\delta;\Theta,\Phi,p}\prec h,$ then $p\prec h.$
\end{corollary}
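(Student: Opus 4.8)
The plan is to deduce this corollary directly from Theorem~\ref{kcorr}; the only thing to check is that the boundedness hypothesis on $\Theta$ together with \eqref{hypo3} forces the condition \eqref{hypo2} of that theorem. So the whole proof reduces to a triangle-inequality estimate on the right-hand side of \eqref{hypo2}.

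First I would bound $|\Theta(z)-1|$: since $|\Theta(z)|\leq M$ on $\mathbb{D}$, we have $|\Theta(z)-1|\leq |\Theta(z)|+1\leq M+1$. Next I would bound the term $-\RE(\Theta(z)-1)=1-\RE\Theta(z)$; since $\RE\Theta(z)\geq -|\Theta(z)|\geq -M$, this gives $-\RE(\Theta(z)-1)\leq 1+M=M+1$. Combining the two estimates, $5|\Theta(z)-1|-\RE(\Theta(z)-1)\leq 5(M+1)+(M+1)=6(M+1)$ for all $z\in\mathbb{D}$.

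Hence \eqref{hypo3} yields $\RE\Phi(z)\geq 6(M+1)\geq 5|\Theta(z)-1|-\RE(\Theta(z)-1)$, which is precisely \eqref{hypo2}. Since all the other hypotheses ($\delta\in[0,1]$, $h\in\mathcal{Q}$ with $0\in\overline{h(\mathbb{D})}$, $\Theta,\Phi\in\mathcal{H}$, $\Theta(0)=1$, $p\in\mathcal{H}(\delta;\Theta,\Phi)$, $p(0)=h(0)$ and $H_{\delta;\Theta,\Phi,p}\prec h$) are assumed here exactly as in Theorem~\ref{kcorr}, that theorem applies and gives $p\prec h$. There is no real obstacle: the argument is just the observation that $\RE\Phi(z)>0$ is automatic from \eqref{hypo3} and that the crude bounds $|\Theta(z)-1|\leq M+1$ and $1-\RE\Theta(z)\leq M+1$ absorb the right-hand side of \eqref{hypo2} into the single uniform constant $6(M+1)$.
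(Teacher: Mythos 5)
Your proof is correct and follows essentially the same route as the paper: reduce to Theorem~\ref{kcorr} by checking that \eqref{hypo3} implies \eqref{hypo2} via elementary triangle-inequality estimates. The only cosmetic difference is that the paper first uses $-\RE(\Theta(z)-1)\leq|\Theta(z)-1|$ to collect everything into $6|\Theta(z)-1|\leq 6(M+1)$, whereas you bound the two terms separately by $M+1$ each; both give the same constant.
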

\begin{proof}
	We know that $-\RE(\Theta(z)-1)\leq |\Theta(z)-1|,$ which gives
	\begin{equation*}
		5|\Theta(z)-1|-\RE(\Theta(z)-1)\leq 6|\Theta(z)-1|\leq 6(|\Theta(z)|+1)\leq 6(M+1).
	\end{equation*}
	Clearly~\eqref{hypo3} is sufficient for~\eqref{hypo2} to hold true. Thus the result follows as an application of Theorem~\ref{kcorr}.
\end{proof}
\begin{theorem}
	Let $\delta\in[0,1],$ and $\Theta,\Phi\in\mathcal{H}$ be such that $\Theta(0)=1$ and
	\begin{equation}\label{z1}
		\RE \Phi(z)>2(|\Theta(z)-1|-\RE (\Theta(z)-1)),\quad z\in\mathbb{D}.
	\end{equation}
	If $p\in\mathcal{H}(\delta;\Theta,\Phi),\;p(0)=1$ and $H_{\delta;\Theta,\Phi,p}\prec \sqrt{1+z},$ then $p\prec \sqrt{1+z}.$
\end{theorem}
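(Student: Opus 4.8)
The plan is to obtain this theorem as a direct application of Lemma~\ref{mainn} with the particular choice $h(z)=\sqrt{1+z}$. Accordingly, the task reduces to verifying the four hypotheses of that lemma for this $h$: (i) $h\in\mathcal{Q}$; (ii) $0\in\overline{h(\mathbb{D})}$; (iii) $\RE\Phi(z)>0$ on $\mathbb{D}$; and (iv) condition~\eqref{hypo} holds for the given $\Theta$ and $\Phi$. Since $p(0)=1=\sqrt{1}=h(0)$ and $H_{\delta;\Theta,\Phi,p}\prec\sqrt{1+z}=h$ by hypothesis, once (i)--(iv) are checked Lemma~\ref{mainn} yields $p\prec\sqrt{1+z}$ at once.

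For (i) and (ii) I would invoke the well-known properties of $h(z)=\sqrt{1+z}$: it is convex, since a short computation gives $1+zh''(z)/h'(z)=(2+z)/(2(1+z))$, a M\"{o}bius map carrying $\mathbb{D}$ onto the half-plane $\{\RE w>3/4\}$; it is univalent and maps $\mathbb{D}$ onto the right-hand component $\{w:\RE w>0,\ |w^2-1|<1\}$ of the Bernoulli lemniscate, a domain bounded by one analytic arc having a single corner at $w=0=h(-1)$. Hence $E(h)=\{-1\}$, $h'(\zeta)=1/(2\sqrt{1+\zeta})$ exists and is nonzero for every $\zeta\in\partial\mathbb{D}\setminus\{-1\}$, so $h\in\mathcal{Q}$, while $0\in\partial h(\mathbb{D})\subset\overline{h(\mathbb{D})}$.

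The substantive step is (iv); (iii) comes for free, since $|\Theta(z)-1|\ge\RE(\Theta(z)-1)$ makes the right-hand side of~\eqref{z1} nonnegative, forcing $\RE\Phi(z)>0$. For (iv), from $zh'(z)/h(z)=z/(2(1+z))$ one reads off the identity
\begin{equation*}
\frac{h(\zeta)}{\zeta h'(\zeta)}=\frac{2(1+\zeta)}{\zeta}=2+\frac{2}{\zeta},\qquad \zeta\in\partial\mathbb{D}\setminus\{-1\},
\end{equation*}
so that $h(\zeta)/(\zeta h'(\zeta))$ runs over the circle $|X-2|=2$. Since $|2/\zeta|=2$ on $\partial\mathbb{D}$, for every $z\in\mathbb{D}$,
\begin{equation*}
\RE\left(\frac{h(\zeta)}{\zeta h'(\zeta)}\bigl(\Theta(z)-1\bigr)\right)=2\RE\bigl(\Theta(z)-1\bigr)+\RE\left(\frac{2}{\zeta}\bigl(\Theta(z)-1\bigr)\right)\ge 2\RE\bigl(\Theta(z)-1\bigr)-2|\Theta(z)-1|,
\end{equation*}
whence $\RE\bigl(\Phi(z)+\tfrac{h(\zeta)}{\zeta h'(\zeta)}(\Theta(z)-1)\bigr)\ge\RE\Phi(z)-2\bigl(|\Theta(z)-1|-\RE(\Theta(z)-1)\bigr)>0$ by~\eqref{z1}. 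This is precisely~\eqref{hypo}, and the theorem follows from Lemma~\ref{mainn}.

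The delicate point --- and the reason the multiplier $2$ suffices in~\eqref{z1} --- is that one must exploit the exact location of $h(\zeta)/(\zeta h'(\zeta))$ on the circle $|X-2|=2$ (the identity $X=2+2/\zeta$), rather than the cruder Marx--Strohh\"{a}cker-type bound $|X-1|\le 3$ used in Theorem~\ref{kcorr}, which would only yield the stronger demand $\RE\Phi>3|\Theta-1|-\RE(\Theta-1)$. It is also worth noting that the corner $\zeta=-1$, where $h'$ is unbounded, causes no trouble: $-1\in E(h)$, whereas in the proof of Lemma~\ref{mainn} the relevant boundary point $\zeta_0$ always lies in $\partial\mathbb{D}\setminus E(h)$; and the factor $m\ge1$ in that proof is absorbed harmlessly because $\RE\Phi>0$.
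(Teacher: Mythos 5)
Your proof is correct and follows essentially the same route as the paper: apply Lemma~\ref{mainn} with $h(z)=\sqrt{1+z}$, compute $h(\zeta)/(\zeta h'(\zeta))=2+2/\zeta$, and bound $\RE\bigl(2(\Theta(z)-1)(1+1/\zeta)\bigr)\ge 2\RE(\Theta(z)-1)-2|\Theta(z)-1|$ via $|1/\zeta|=1$, so that~\eqref{z1} gives~\eqref{hypo}. Your additional verifications that $h\in\mathcal{Q}$, $0\in\overline{h(\mathbb{D})}$ and $\RE\Phi>0$ are correct details the paper leaves implicit.
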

\begin{proof}
	In view of Lemma~\ref{mainn}, if we take $h(z)=\sqrt{1+z},$ then~\eqref{hypo} reduces to
	\begin{equation*}
		\RE\left(2(\Theta(z)-1)\left(1+\dfrac{1}{\zeta}\right)+\Phi(z)\right)\geq 2\RE(\Theta(z)-1)-2|\Theta(z)-1|+\RE \Phi(z),
	\end{equation*}
	which by~\eqref{z1} is greater than 0. Thus the result follows as an application of Lemma~\ref{mainn}.
\end{proof}
\begin{theorem}
	Let $\delta\in[0,1],$ $\gamma\in(0,1]$ and $\Theta,\Phi\in\mathcal{H}$ be such that $\Theta(0)=1,\Theta'(0)>0$ and $\RE \Phi(z)>0.$ Suppose $p\in\mathcal{H}(\delta;\Theta,\Phi
	),\;p(0)=1$ and $p'(0)>0,$ then
	\begin{equation}\label{z2}
		H_{\delta;\Theta,\Phi,p}\prec\left(\dfrac{1+z}{1-z}\right)^{\gamma}\quad\Rightarrow\quad p(z)\prec \left(\dfrac{1+z}{1-z}\right)^{\gamma}.
	\end{equation}
\end{theorem}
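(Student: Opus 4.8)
The plan is to derive \eqref{z2} by applying Lemma~\ref{mainn} with the univalent function $h(z)=\left((1+z)/(1-z)\right)^{\gamma}$. First I would record that this $h$ meets the structural requirements of Lemma~\ref{mainn}: the M\"obius map $z\mapsto(1+z)/(1-z)$ carries $\mathbb{D}$ onto the right half-plane, so $h$ maps $\mathbb{D}$ conformally onto the sector $\{w:|\arg w|<\gamma\pi/2\}$, which is convex exactly because $\gamma\le 1$; its boundary is made up of two smooth rays meeting at a corner at $0$ (the image of $\zeta=-1$) and a corner at $\infty$ (the image of $\zeta=1$), so $h\in\mathcal{Q}$ with $E(h)=\{-1,1\}$ and $h'(\zeta)\neq 0$ on $\partial\mathbb{D}\setminus E(h)$. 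Moreover $0\in\partial h(\mathbb{D})\subset\overline{h(\mathbb{D})}$ and $h(0)=1=p(0)$, while $p\in\mathcal{H}(\delta;\Theta,\Phi)$ and $H_{\delta;\Theta,\Phi,p}\prec h$ are given; the positivity of $\Theta'(0)$ and $p'(0)$ is only needed to rule out the constant cases, which Lemma~\ref{mainn} disposes of trivially. Thus the whole matter comes down to checking the hypothesis~\eqref{hypo} for this particular $h$, $\Theta$ and $\Phi$.

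The computation I would run is the boundary behaviour of $h(\zeta)/(\zeta h'(\zeta))$. Logarithmic differentiation gives $\zeta h'(\zeta)/h(\zeta)=2\gamma\zeta/(1-\zeta^{2})$, hence $h(\zeta)/(\zeta h'(\zeta))=(1-\zeta^{2})/(2\gamma\zeta)$, and on $\zeta=e^{i\theta}\in\partial\mathbb{D}$ this simplifies to $-i\sin\theta/\gamma$ --- a purely imaginary quantity of modulus at most $1/\gamma$. Writing $\Theta(z)-1=u+iv$, the left-hand side of~\eqref{hypo} then has real part
\[
\RE\left(\Phi(z)+\dfrac{h(\zeta)}{\zeta h'(\zeta)}(\Theta(z)-1)\right)=\RE\Phi(z)+\dfrac{\sin\theta}{\gamma}\,v\ \geq\ \RE\Phi(z)-\dfrac{1}{\gamma}\,|\Theta(z)-1|.
\]

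The step I expect to be the main obstacle is confirming that this last expression is strictly positive for every $z\in\mathbb{D}$; equivalently, since $h(\zeta)/(\zeta h'(\zeta))$ is purely imaginary, \eqref{hypo} amounts to the pointwise inequality $\gamma\RE\Phi(z)>|\IM\Theta(z)|$ on $\mathbb{D}$. This is precisely where the assumptions on $\Phi$ and $\Theta$ must be exploited: the bound $\RE\Phi(z)>0$ already settles matters when $\Theta$ stays close to $1$, and in general one has to extract from the hypotheses the required control of $|\IM\Theta(z)|$ (or of $|\Theta(z)-1|$) by $\gamma\RE\Phi(z)$. Once~\eqref{hypo} has been verified, Lemma~\ref{mainn} applies word for word and delivers $p\prec h$, that is $p(z)\prec\left((1+z)/(1-z)\right)^{\gamma}$, which is~\eqref{z2}.
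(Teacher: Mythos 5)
You set the proof up exactly as the paper does---apply Lemma~\ref{mainn} with $h(z)=((1+z)/(1-z))^{\gamma}$, check $h\in\mathcal{Q}$, $0\in\overline{h(\mathbb{D})}$, $h(0)=p(0)$, and reduce everything to verifying the hypothesis \eqref{hypo}---and your computation $h(\zeta)/(\zeta h'(\zeta))=(1-\zeta^{2})/(2\gamma\zeta)=-i\sin\theta/\gamma$ on $\zeta=e^{i\theta}$ is precisely the expression the paper works with. But you then stop at the decisive step: you observe that \eqref{hypo} becomes $\RE\Phi(z)+(\sin\theta/\gamma)\IM\Theta(z)>0$ and concede that you do not see how to obtain this from $\RE\Phi(z)>0$, $\Theta'(0)>0$ and $p'(0)>0$. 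That verification is the entire content of the theorem beyond Lemma~\ref{mainn}, so as written the proposal is incomplete.

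For comparison, the paper closes this step by quoting \cite[Corollary~6]{pri4}: it asserts that $\RE\bigl((\Theta(z)-1)(1-\zeta^{2})/(2\gamma\zeta)\bigr)>0$ ``provided $\Theta'(0)>0$ and $p'(0)>0$,'' and then simply adds $\RE\Phi(z)>0$. Your own calculation explains why you were stuck: since $(1-\zeta^{2})/(2\gamma\zeta)$ is purely imaginary on $\partial\mathbb{D}$ and its sign flips with $\theta$, the asserted positivity cannot be a pointwise inequality valid for all independent pairs $(z,\zeta)\in\mathbb{D}\times\partial\mathbb{D}$ unless $\IM\Theta\equiv0$; it can only be meaningful for the coupled points $z_0$, $\zeta_0$ arising inside the proof of Lemma~\ref{mainn} (where $p(z_0)=h(\zeta_0)$ and $z_0p'(z_0)=m\zeta_0h'(\zeta_0)$), which is presumably what the cited corollary exploits. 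So you correctly located the crux, and your doubt about deriving the uniform bound $\gamma\RE\Phi(z)>|\IM\Theta(z)|$ from the stated hypotheses is justified---that inequality does not follow from them---but a complete argument must either import the external result the paper leans on or sharpen Lemma~\ref{mainn} so that \eqref{hypo} need only be checked on the coupled pairs. Neither is carried out in the proposal.
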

\begin{proof}
	Let $h(z)=((1+z)/(1-z))^{\gamma}.$ From the proof of~\cite[Corollary 6]{pri4}, it is easy to observe that
	\begin{equation*}
		\RE (\Theta(z)-1)\dfrac{h(\zeta)}{\zeta h'(\zeta)}=\RE (\Theta(z)-1)\dfrac{1-\zeta^2}{2\gamma\zeta}>0,
	\end{equation*}
	provided $\Theta'(0)>0$ and $p'(0)>0$ which is true by hypothesis. Using the fact that $\RE \Phi(z)>0$ and Lemma~\ref{mainn}, the implication~\eqref{z2} follows and hence the result.
\end{proof}
\begin{theorem}
	Let $f\in\mathcal{A}$ and $g\in\mathcal{S}^*$ with $g(z)\neq zf'(z)$ and
	\begin{equation}\label{0i}
		\RE\left(\dfrac{2zf'(z)}{g(z)}-\dfrac{2z(f'(z))^2}{3g(z)f'(z)+zf''(z)g(z)-zg'(z)f'(z)}\right)>0,
	\end{equation}
	then $f\in\mathcal{K}.$
	\end{theorem}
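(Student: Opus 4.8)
The plan is to put $p(z):=zf'(z)/g(z)$ and to show that $p\prec h$, where $h(z):=(1+z)/(1-z)$; since $g\in\mathcal{S}^*$, this is precisely the statement $\RE(zf'(z)/g(z))>0$, that is, $f\in\mathcal{K}$. First note that $g\in\mathcal{S}^*$ vanishes only at the origin, with a simple zero there, while $zf'(z)$ also has a simple zero at $0$ (as $f'(0)=1$); hence $p$ is analytic in $\mathbb{D}$ and $p(0)=1=h(0)$.

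The crux is to recognise that the left-hand side of~\eqref{0i} is exactly $H_{1/2;1,1,p}$. Writing $zf'(z)=p(z)g(z)$ and differentiating $p=zf'/g$, one obtains the identity
$$
3g(z)f'(z)+zf''(z)g(z)-zg'(z)f'(z)=\frac{g(z)^{2}}{z}\bigl(2p(z)+zp'(z)\bigr).
$$
Using this together with $z^{2}(f'(z))^{2}=(zf'(z))^{2}=p(z)^{2}g(z)^{2}$, the second term of~\eqref{0i} becomes $2p(z)^{2}/\bigl(2p(z)+zp'(z)\bigr)$, while the first term is simply $2p(z)$. Subtracting,
$$
\frac{2zf'(z)}{g(z)}-\frac{2z(f'(z))^{2}}{3g(z)f'(z)+zf''(z)g(z)-zg'(z)f'(z)}=\frac{2p(z)\bigl(p(z)+zp'(z)\bigr)}{2p(z)+zp'(z)}=H_{1/2;1,1,p}(z),
$$
in the notation of the Remark following Lemma~\ref{mainn}. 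In particular this quantity is analytic in $\mathbb{D}$, so $p\in\mathcal{H}(1/2;1,1)$, and hypothesis~\eqref{0i} asserts that it has positive real part; equivalently, $H_{1/2;1,1,p}\prec h$.

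It then remains to apply Lemma~\ref{mainn} with $\delta=1/2$, $\Theta\equiv1$, $\Phi\equiv1$ and $h(z)=(1+z)/(1-z)$. All its hypotheses are immediate: $h$ maps $\mathbb{D}$ onto the right half-plane, which is convex and bounded by a single line forming a corner at infinity, so $h\in\mathcal{Q}$ and $0\in\overline{h(\mathbb{D})}$; $\Theta(0)=1$ and $\RE\Phi(z)=1>0$; and since $\Theta-1\equiv0$, condition~\eqref{hypo} reduces to $\RE\Phi(z)>0$, which holds. As $p(0)=h(0)$ and $H_{1/2;1,1,p}\prec h$, Lemma~\ref{mainn} yields $p\prec h$, i.e., $\RE(zf'(z)/g(z))>0$ on $\mathbb{D}$, hence $f\in\mathcal{K}$.

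The one genuinely computational point is the displayed algebraic identity collapsing the rational expression in~\eqref{0i} to the harmonic-mean form $2p(p+zp')/(2p+zp')$; once that is recognised, the remainder is a routine verification of the hypotheses of Lemma~\ref{mainn}. The standing assumption $g(z)\neq zf'(z)$ simply says that $p$ never takes the value $1$ in $\mathbb{D}$ (in particular $p\not\equiv1$), which keeps the left-hand side of~\eqref{0i} well defined, and it plays no further role.
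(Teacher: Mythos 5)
Your proof is correct and follows essentially the same route as the paper: set $p=zf'/g$, identify the expression in~\eqref{0i} with the harmonic mean $2p(p+zp')/(2p+zp')=H_{1/2;1,1,p}$, and apply Lemma~\ref{mainn} with $\Theta\equiv\Phi\equiv1$ and $h(z)=(1+z)/(1-z)$. You simply supply more detail than the paper does (the algebraic identity and the verification of the lemma's hypotheses), which is welcome but not a different argument.
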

	\begin{proof}
		Let $p(z)=zf'(z)/g(z),$ then
		\begin{equation*}
			\dfrac{2zf'(z)}{g(z)}-\dfrac{2z(f'(z))^2}{3g(z)f'(z)+zf''(z)g(z)-zg'(z)f'(z)}=\dfrac{2p(z)(p(z)+zp'(z))}{2p(z)+zp'(z)}.
		\end{equation*}
		From~\eqref{0i}, we have
		\begin{equation*}
			\RE\left(\dfrac{2p(z)(p(z)+zp'(z))}{2p(z)+zp'(z)}\right)>0,
		\end{equation*}
		or equivalently,
		\begin{equation*}
			\dfrac{2p(z)(p(z)+zp'(z))}{2p(z)+zp'(z)}\prec\dfrac{1+z}{1-z}.
		\end{equation*}
		By applying Lemma~\ref{mainn} with $\Theta(z)=\Phi(z)=1,\;t=1/2$ and $h(z)=(1+z)/(1-z),$ we get $p(z)\prec(1+z)/(1-z),$ which is equivalent to
		\begin{equation*}
			\RE p(z)=\RE\dfrac{zf'(z)}{g(z)}>0.
		\end{equation*}
		Hence the result.
	\end{proof}
By taking $p(z)=zf'(z)/f(z)$ in Theorem~\ref{genels}, we obtain the following result.
\begin{corollary}
	Let $\gamma\in[0,1],$ $\alpha\in[0,1),$ $\mu\in[0,1],$ $\delta\in[1,2],$ $\beta\in[0,1)$ and $\rho\in[0,1]$ be such that $\rho\geq \alpha(1+2\alpha),$ whenever $\alpha\in[0,1/2].$ Also, let $f\in\mathcal{A}$ satisfies
	\begin{equation*}
	\RE\left(\gamma\left(\dfrac{zf'(z)}{f(z)}\right)^{\delta}+(1-\gamma)\dfrac{\left(\dfrac{zf'(z)}{f(z)}\right)^{1+\mu}\left(1+\dfrac{zf''(z)}{f'(z)}\right)^{1-\mu}}{\rho\left(1+\dfrac{zf''(z)}{f'(z)}\right)+(1-\rho)\dfrac{zf'(z)}{f(z)}}\right)>\beta,
	\end{equation*}
where $\beta$ is as defined in Theorem~\ref{genels}. Then $f$ is starlike of order $\alpha.$
\end{corollary}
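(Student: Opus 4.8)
The plan is to obtain this Corollary directly from Theorem~\ref{genels} by the substitution $p(z):=zf'(z)/f(z)$. First I would observe that for $f\in\mathcal{A}$ one has $p(0)=1$, and $p$ is analytic in $\mathbb{D}$ (the displayed hypothesis implicitly forces $f$ to be nonvanishing on $\mathbb{D}\setminus\{0\}$, which is exactly what makes both $p$ and the left-hand side well defined). Hence $p$ is an admissible test function for Theorem~\ref{genels}, with the same constants $\gamma,\alpha,\mu,\delta,\beta,\rho$ and the same side conditions (in particular $\rho\geq\alpha(1+2\alpha)$ when $\alpha\in[0,1/2]$).

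The heart of the reduction is a logarithmic-derivative computation. Differentiating $\log p(z)=\log z+\log f'(z)-\log f(z)$ and multiplying by $z$ gives $zp'(z)/p(z)=1+zf''(z)/f'(z)-p(z)$, and therefore
\begin{equation*}
p(z)+\frac{zp'(z)}{p(z)}=1+\frac{zf''(z)}{f'(z)},\qquad 1+\rho\,\frac{zp'(z)}{p^{2}(z)}=\frac{1}{p(z)}\left(\rho\Bigl(1+\frac{zf''(z)}{f'(z)}\Bigr)+(1-\rho)p(z)\right).
\end{equation*}
Substituting these two identities into the quotient appearing in Theorem~\ref{genels} and cancelling the common factor $p(z)$ from numerator and denominator, I would get
\begin{equation*}
\frac{(p(z))^{\mu}\bigl(p(z)+zp'(z)/p(z)\bigr)^{1-\mu}}{1+\rho\,zp'(z)/p^{2}(z)}
=\frac{(p(z))^{1+\mu}\bigl(1+zf''(z)/f'(z)\bigr)^{1-\mu}}{\rho\bigl(1+zf''(z)/f'(z)\bigr)+(1-\rho)p(z)}.
\end{equation*}
Replacing $p(z)$ by $zf'(z)/f(z)$ turns the right-hand side into exactly the quotient appearing in the Corollary's hypothesis, while $(p(z))^{\delta}=(zf'(z)/f(z))^{\delta}$ matches the remaining term.

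Consequently, the hypothesis of the Corollary is precisely the hypothesis of Theorem~\ref{genels} for this choice of $p$, so Theorem~\ref{genels} yields $\RE p(z)>\alpha$, i.e.\ $\RE\bigl(zf'(z)/f(z)\bigr)>\alpha$, which is the statement that $f$ is starlike of order $\alpha$. The only point requiring care is the algebraic bookkeeping — in particular correctly producing the exponent $1+\mu$ when the factor $p(z)$ freed from the denominator multiplies the numerator $(p(z))^{\mu}$ — together with the implicit regularity ($f(z)\ne 0$, $p(0)=1$) that makes $p$ a legitimate input to Theorem~\ref{genels}; no new estimates are needed beyond those already established there.
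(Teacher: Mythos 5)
Your proposal is correct and follows exactly the route the paper intends: the paper gives no written proof beyond the remark ``By taking $p(z)=zf'(z)/f(z)$ in Theorem~\ref{genels}, we obtain the following result,'' and your logarithmic-derivative identities $p(z)+zp'(z)/p(z)=1+zf''(z)/f'(z)$ and $1+\rho zp'(z)/p^{2}(z)=\bigl(\rho(1+zf''(z)/f'(z))+(1-\rho)p(z)\bigr)/p(z)$ correctly reduce the corollary's hypothesis to that of the theorem, including the exponent $1+\mu$. You simply supply the bookkeeping the paper leaves implicit.
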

\begin{remark}
	If we take $\delta=1,\;\mu=0$ and $\rho=0$ in the above result, then it implies that the class of $\gamma$-convex functions of order $\beta$ is contained in the class of starlike functions of order $\alpha.$ Further by taking $\alpha=0,$ it reduces to a well known result which states that every $\gamma$-convex functions is starlike(see~\cite{mocanu1969,mocanu1972,mocanu1973}).
\end{remark}
\begin{remark}
By taking $\delta=1,\rho=0$ and $\alpha=0$ in the above result, we obtain a well known result which states that a $\mu$-starlike function is starlike~\cite{lewa,lewa2}.
\end{remark}
By taking $p(z)=f'(z)$ in Theorem~\ref{genels}, we obtain the following result.
\begin{corollary}
	Let $\gamma\in[0,1],$ $\alpha\in[0,1),$ $\mu\in[0,1],$ $\delta\in[1,2],$ $\beta\in[0,1)$ and $\rho\in[0,1]$ be such that $\rho\geq \alpha(1+2\alpha),$ whenever $\alpha\in[0,1/2].$ Also, let $f\in\mathcal{A}$ satisfies
	\begin{equation*}
	\RE\left(\gamma(f'(z))^{\delta}+(1-\gamma)\dfrac{(f'(z))^{1+\mu}\left(f'(z)+\dfrac{zf''(z)}{f'(z)}\right)^{1-\mu}}{f'(z)+\rho\dfrac{zf''(z)}{f'(z)}}\right)>\beta,
	\end{equation*}
	where $\beta$ is as defined in Theorem~\ref{genels}. Then $\RE(f'(z))>\alpha$ and therefore $f$ is univalent in $\mathbb{D}$.
\end{corollary}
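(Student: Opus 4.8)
The plan is to reduce the corollary directly to Theorem~\ref{genels} by the substitution $p(z):=f'(z)$. Since $f\in\mathcal{A}$, the function $p$ is analytic on $\mathbb{D}$ with $p(0)=f'(0)=1$, so $p$ is an admissible choice in Theorem~\ref{genels}. The only work is to verify that the quantity appearing in the hypothesis of the corollary coincides with the one in Theorem~\ref{genels} evaluated at $p=f'$; everything else is a citation.

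First I would record the elementary identities coming from $p=f'$, namely $zp'(z)=zf''(z)$, hence
\begin{equation*}
p(z)+\dfrac{zp'(z)}{p(z)}=f'(z)+\dfrac{zf''(z)}{f'(z)},\qquad \dfrac{zp'(z)}{p^2(z)}=\dfrac{zf''(z)}{(f'(z))^2}.
\end{equation*}
Then I would multiply the numerator and the denominator of the fraction
\begin{equation*}
\dfrac{(p(z))^{\mu}\left(p(z)+\dfrac{zp'(z)}{p(z)}\right)^{1-\mu}}{1+\rho\dfrac{zp'(z)}{p^2(z)}}
\end{equation*}
by $p(z)=f'(z)$: the numerator becomes $(f'(z))^{1+\mu}\left(f'(z)+zf''(z)/f'(z)\right)^{1-\mu}$ and the denominator becomes $f'(z)+\rho\,zf''(z)/f'(z)$. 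Together with $\gamma(p(z))^{\delta}=\gamma(f'(z))^{\delta}$, this shows that the expression whose real part is assumed to exceed $\beta$ in the corollary is exactly the expression of Theorem~\ref{genels} with $p=f'$.

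Applying Theorem~\ref{genels} (with the same parameters $\gamma,\alpha,\mu,\delta,\beta,\rho$, and with $\beta$ in the stated range) yields $\RE p(z)=\RE f'(z)>\alpha$ on $\mathbb{D}$. Since $\alpha\geq 0$, this gives $\RE f'(z)>0$, so $f\in\mathcal{R}\subset\mathcal{S}$ and $f$ is univalent in $\mathbb{D}$, which is the assertion. I do not expect any real obstacle here: the proof is purely a matter of the algebraic rewriting above, and the case $f'\equiv 1$ (i.e.\ $f(z)=z$) is trivially covered since then $\RE f'(z)=1>\alpha$. One should only note that the displayed hypothesis implicitly presupposes $f'(z)+\rho\,zf''(z)/f'(z)\neq 0$, which is precisely what makes the fraction (equivalently $1+\rho\,zp'(z)/p^2(z)\neq 0$) well defined, matching the standing assumption in Theorem~\ref{genels}.
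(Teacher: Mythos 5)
Your proposal is correct and coincides with the paper's approach: the paper states the corollary as an immediate consequence of Theorem~\ref{genels} under the substitution $p(z)=f'(z)$, and your algebraic rewriting (clearing $p(z)=f'(z)$ from numerator and denominator) is exactly the computation that justifies it. The concluding step $\RE f'(z)>\alpha\geq 0\Rightarrow f\in\mathcal{R}\subset\mathcal{S}$ is likewise the intended argument.
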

By taking $p(z)=f(z)/z$ in Theorem~\ref{genels}, we obtain the following result.
\begin{corollary}
		Let $\gamma\in[0,1],$ $\alpha\in[0,1),$ $\mu\in[0,1],$ $\delta\in[1,2],$ $\beta\in[0,1)$ and $\rho\in[0,1]$ be such that $\rho\geq \alpha(1+2\alpha),$ whenever $\alpha\in[0,1/2].$ Also, let $f\in\mathcal{A}$ satisfies
	\begin{equation*}
		\RE\left(\gamma\left(\dfrac{f(z)}{z}\right)^{\delta}+(1-\gamma)\dfrac{\left(\dfrac{f(z)}{z}\right)^{1+\mu}\left(\dfrac{f(z)}{z}+\dfrac{zf'(z)}{f(z)}-1\right)^{1-\mu}}{\dfrac{f(z)}{z}+\rho\dfrac{zf'(z)}{f(z)}}-1\right)>\beta,
	\end{equation*}
	where $\beta$ is as defined in Theorem~\ref{genels}. Then $\RE(f(z)/z)>\alpha.$
\end{corollary}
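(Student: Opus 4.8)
The corollary is a direct specialization of Theorem~\ref{genels}: the plan is to set $p(z):=f(z)/z$ and verify that the stated hypothesis on $f$ is exactly the hypothesis of Theorem~\ref{genels} for this $p$. Admissibility is immediate: since $f\in\mathcal{A}$ has the form $f(z)=z+a_2z^2+\cdots$, the quotient $p(z)=f(z)/z=1+a_2z+\cdots$ extends analytically across $z=0$ with $p(0)=1$ (and the displayed expression is well defined precisely when $p$ is nonvanishing on $\mathbb{D}\setminus\{0\}$). If $f(z)\equiv z$ then $p\equiv 1$ and $\RE(f(z)/z)=1>\alpha$ is trivial, so we may assume $p\not\equiv 1$.

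Next I would record the identities forced by $p=f/z$. Differentiation gives $zp'(z)=f'(z)-f(z)/z=f'(z)-p(z)$, whence
\begin{equation*}
\frac{zp'(z)}{p(z)}=\frac{zf'(z)}{f(z)}-1,\qquad \frac{zp'(z)}{p^{2}(z)}=\frac{z}{f(z)}\left(\frac{zf'(z)}{f(z)}-1\right),
\end{equation*}
and in particular $p(z)+zp'(z)/p(z)=f(z)/z+zf'(z)/f(z)-1$, which is exactly the quantity raised to the power $1-\mu$ in the corollary.

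Then I would substitute these into the left-hand differential expression
\begin{equation*}
\gamma\bigl(p(z)\bigr)^{\delta}+(1-\gamma)\,\frac{\bigl(p(z)\bigr)^{\mu}\bigl(p(z)+zp'(z)/p(z)\bigr)^{1-\mu}}{1+\rho\,zp'(z)/p^{2}(z)}
\end{equation*}
of Theorem~\ref{genels}, and simplify the second summand by multiplying its numerator and denominator by $p(z)=f(z)/z$: this converts $\bigl(p(z)\bigr)^{\mu}$ into $\bigl(f(z)/z\bigr)^{1+\mu}$ and turns $1+\rho\,zp'(z)/p^{2}(z)$ into the displayed combination of $f(z)/z$ and $zf'(z)/f(z)$, so the whole expression becomes the one appearing in the corollary. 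All parameter restrictions ($\gamma,\beta\in[0,1)$, $\alpha\in[0,1)$, $\mu,\rho\in[0,1]$, $\delta\in[1,2]$, and $\rho\ge\alpha(1+2\alpha)$ when $\alpha\in[0,1/2]$), together with the bound on $\beta$ in terms of $\beta_1$, transfer verbatim. Hence the assumed inequality is precisely the hypothesis of Theorem~\ref{genels}, which therefore gives $\RE p(z)>\alpha$, i.e. $\RE(f(z)/z)>\alpha$.

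The only genuine work is the algebraic bookkeeping in the substitution step — in particular clearing the common factor $p(z)$ so that the exponent becomes $1+\mu$ and the denominator collapses to the displayed affine expression in $f(z)/z$ and $zf'(z)/f(z)$. There is no analytic obstacle, since all the real content — the boundary-point analysis and the logarithmic-spiral estimate — is already packaged inside Theorem~\ref{genels}.
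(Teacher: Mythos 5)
Your route is the same as the paper's: the corollary is given there with no argument beyond the phrase ``by taking $p(z)=f(z)/z$ in Theorem~\ref{genels}'', and your identities $zp'(z)=f'(z)-f(z)/z$ and $zp'(z)/p(z)=zf'(z)/f(z)-1$ are exactly the needed bookkeeping. One concrete caveat, though: your assertion that clearing the common factor $p(z)$ turns $1+\rho\,zp'(z)/p^{2}(z)$ into ``the displayed combination'' does not hold for the corollary as printed. Since
\begin{equation*}
\frac{zp'(z)}{p^{2}(z)}=\frac{z}{f(z)}\left(\frac{zf'(z)}{f(z)}-1\right),
\end{equation*}
multiplying numerator and denominator of the second summand by $f(z)/z$ produces the denominator
\begin{equation*}
\frac{f(z)}{z}+\rho\left(\frac{zf'(z)}{f(z)}-1\right)=\frac{f(z)}{z}+\rho\,\frac{zf'(z)}{f(z)}-\rho,
\end{equation*}
whereas the statement displays $f(z)/z+\rho\,zf'(z)/f(z)$ with a stray ``$-1$'' sitting outside the fraction. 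This is evidently a typographical slip in the paper (the subtracted term should be $\rho$ and should sit inside the denominator; the analogous corollaries for $p=zf'/f$ and $p=f'$ have no such trailing term), and your substitution proves the corrected statement. But because you explicitly claimed the algebra lands on the printed formula, you should either exhibit that computation or, as is actually the case, record that it does not and that the hypothesis must be read with the $-\rho$ inside the denominator.
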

%

\end{document}